\documentclass[12pt]{amsart}
\usepackage[T1]{fontenc}
\usepackage[english]{babel}
\usepackage[english]{babel}
\usepackage{amsmath, amssymb, amsthm, amscd,color,comment}
\usepackage{cancel}
\usepackage{cite}
\usepackage{alltt}
\usepackage[dvipsnames]{xcolor}
\usepackage{array}
\usepackage[small,bf,labelsep=period]{caption}
\usepackage{mathtools}
\usepackage{hyperref}

\usepackage{enumerate}
\newtheorem{theorem}{Theorem}[section]
\newtheorem{definition}[theorem]{Definition}
\newtheorem{example}[theorem]{Example}

\newtheorem{remark}[theorem]{Remark}
\newtheorem{lemma}[theorem]{Lemma}
\newtheorem{corollary}[theorem]{Corollary}
\newtheorem{proposition}[theorem]{Proposition}

\DeclarePairedDelimiter\ceil{\lceil}{\rceil}
\DeclarePairedDelimiter\floor{\lfloor}{\rfloor}

\def\la{\lambda}

\def\Q{{\bf Q}}

\def\N{\mathbb N}
\def\R{\mathbb R}
\def\Z{\mathbb Z}

\def\cC{\mathcal C}
\newcommand{\C}{\mathcal{C}}
\newcommand{\CL}{\mathcal{C}_\mathcal{L}}

\newcommand{\LL}{\mathcal{L}}

\def\cH{\mathcal H}

\def\cL{\mathcal L}

\def\cP{\mathcal P}

\def\cW{\mathcal W}
\def\cX{\mathcal X}
\def\cY{\mathcal Y}

\def\fqss{\mathbb F_{q^6}}
\def\fqs{\mathbb F_{q^2}}

\def\fq{\mathbb F_q}

\def\supp{{\rm Supp}}

\def\Div{{\rm Div}}
\def\dim{{\rm dim}}
\def\deg{{\rm deg}}

\def\supp{{\rm Supp}}
\def\char{{\rm Char}}

\def\char{\mbox{\rm Char}}

\def\negalpha{\text{\boldmath$\alpha$}}

\def\neg1{\text{\boldmath$1$}}

\def\negj{\text{\boldmath$j$}}
\def\negbeta{\text{\boldmath$\beta$}}

\def\neg1{\text{\boldmath$1$}}

\DeclareMathOperator{\lmd}{lmd}

\DeclareMathOperator{\Supp}{Supp}

\newcommand{\al}{\alpha}
\newcommand{\be}{\beta}

\begin{document}

\title[Non-special divisors of small degree in  Kummer extensions]{Characterization of non-special divisors of small degree on Kummer extensions and LCP codes}

\thanks{{\bf Mathematics Subject Classification (2020)}: 11T71, 14G50, 14H55}

\thanks{{\bf Keywords}: algebraic geometry codes, linear complementary pairs, Kummer extensions,
 generalized Weierstrass semigroups, non-special divisors}

\thanks{Erik Mendoza was partially supported by  Coordenação de Aperfeiçoamento de Pessoal de Nível Superior CAPES/Brasil, 001. Horacio Navarro was partially supported by Project 71411 at Universidad del Valle. 
Luciane Quoos was partially supported by Conselho
Nacional de Desenvolvimento Científico e Tecnológico CNPq/Brasil (Bolsa produtividade 307261/2023-9), Fundação de Amparo a Pesquisa do Estado do Rio de Janeiro FAPERJ/Brasil (CNE E-26/204.037/2024), and CAPES/Brasil, 001.}

\author{Erik Mendoza, Horacio Navarro, and Luciane Quoos}

\address{Instituto de Matemática, Universidade Federal do Rio de Janeiro, Cidade Universitária,
	CEP 21941-909, Rio de Janeiro, Brazil (email: erik@im.ufrj.br)}

\address{Departamento de Matemáticas, Universidad del Valle, Calle 13 \# 100-00, Cali, Colombia (email: horacio.navarro@correounivalle.edu.co)}

\address{Instituto de Matemática, Universidade Federal do Rio de Janeiro, Cidade Universitária,
	CEP 21941-909, Rio de Janeiro, Brazil (email: luciane@im.ufrj.br)}

\begin{abstract}
 A recent construction of linear complementary pairs (LCPs) of algebraic geometry codes is intimately linked to the identification of non-special divisors of small degree within a function field over a finite field. Let $\fq$ be the finite field of cardinality $q$. In this work, we consider a function field $F/\mathbb{F}_q$ of genus $g$ defined by a Kummer extension of type $y^m = f(x)$,    where $f(x)$ is a polynomial in $\fq[x]$. Based on the theory of generalized Weierstrass semigroups at several places, we provide an arithmetic criterion to identify all non-special divisors of degree $g-1$ and $g$ whose support is contained in a subset of the totally ramified places of the extension $F/\mathbb{F}_q(x)$. Furthermore, we explicitly determine all non-special divisors of degree $g-1$ in certain cases. Finally, we apply these results to provide explicit new families of LCPs algebraic geometry codes.
\end{abstract}
\maketitle

\section{Introduction}
A linear code $\C$ over a finite field $\fq$ of cardinality $q$ is a subspace of $\fq^n$, where $n\geq 1$. The code $\C$ is said to be an $[n, k, d]$ code, where $n$ is the length, $k$ is the dimension as an $\fq$-vector space, and $d$ is the (Hamming) distance. A pair of codes $(\C_1, \C_2)$ of the same length defined over $\fq$ is called a {\it linear complementary pair} (LCP) if 
$$ \C_1 + \C_2 = \fq^n \quad \text{ and } \quad \C_1 \cap \C_2 = \{0\}.$$

Linear complementary pairs of codes have attracted considerable interest due to their applications in cryptography, including fault injection attacks \cite{BCC2014} and hardening of encoded circuits against hardware Trojans \cite{NGB2014, NBDGN2015}. We refer the reader to  \cite{M1992, CMQ2025, NBDGN2015, CGOOS2018, LSL2025} for detailed characterizations and constructions of LCPs of codes.
An important family of linear codes is formed by algebraic geometry codes  (AG codes) introduced by Goppa in the eighties, see \cite{G1981}. In a seminal work  Tsfasman, Vl\u{a}du\textcommabelow{t}, and Zink \cite{TVZ1982} proved that algebraic geometry codes can exceed the Gilbert-Varshamov bound, making this family of  codes a cornerstone of coding theory. Beyond classical error correction, they have also been  utilized in distributed storage via locally recoverable codes \cite{BTV2017, TB2014, CKMTW2023},  and in secure computation via secure distributed matrix multiplication  \cite{MSH2025,LLX2026}. In this work, we investigate explicit constructions of LCPs of AG codes.

The construction of algebraic geometry codes  is based on algebraic function fields over finite fields (or equivalently, algebraic curves over finite fields). Let $F/\fq$ be a function field of genus $g$. Consider $P_1, \dots, P_n$ pairwise distinct rational places of $F/\fq$ and the divisor  $D =\sum_{i=1}^n P_i$. Fix a second divisor  $G $ with $\Supp (G) \cap \Supp (D) = \emptyset$.  The {\it algebraic geometry code}  $\C_\LL(D,G)$ is defined by 
$$\CL(D,G) = \{ (f(P_1), \cdots , f(P_n)) \, : \, f \in \LL(G) \} \subseteq \fq^n, $$
where $\LL(G)$ is the Riemann-Roch space associated with $G$ of dimension $\ell(G)$ over $\fq$.

We say a divisor $G$ in $F$ is {\it non-special } if the dimension of the Riemann-Roch space is $$\ell(G)=\deg(G)+1-g,$$ where $\deg(G)$ denotes the degree of the divisor $G$. Otherwise the divisor is said to be special. According to  the Riemann-Roch theorem,  if $\deg(G) \geq 2g - 1$, then $G$ is a non-special divisor. Conversely, the theorem also implies that if $G$ is a non-special divisor, then $\deg(G) \geq g-1$. Consequently, non-special divisors of degree $g-1$ or $g$ are referred to as non-special divisors of small degree.

Special divisors have been used to obtain an improved algorithm to decode algebraic geometry codes, see \cite{D2002}. In \cite{P1993}, Pellikaan introduced and investigated a two-variable zeta function as a generating function counting divisor classes of a given degree and dimension. On the other hand recent research on LCPs of AG codes was done by Mesnager et al.  in \cite{MTQ2017} and \cite{BDM2024}. In these works, the proposed methods for constructing LCPs of AG codes rely on the existence of non-special divisors of degree $g-1$. These results motivated the search for  such divisors explicitly. 
	
	In particular, in \cite{CLM2024} Camps Moreno et al. provide explicit constructions of non-special divisors of small degree in Kummer extensions given by $y^m = f(x)$, where $f(x) \in \mathbb{F}_q[x]$ is a separable polynomial and $(\deg (f(x)), m)=1$. In that work, the authors use an  explicit description of a generating set for the  Weierstrass semigroup at several places to construct effective non-special divisors of degree $g$. From these, they derive non-special divisors of degree $g-1$  via a result by Ballet and Le Brigand \cite{BL2006} (see Lemma \ref{lemma1}). Building on this approach, Castellanos et al. \cite{CMQ2025} exploit the divisor constructions from \cite{CLM2024}  to obtain LCPs of AG codes in these Kummer extensions.

In this paper, we consider  a Kummer extension of the form 
\begin{equation}\label{kummergeneral}
\cX: \quad y^m=f(x),
\end{equation}  where $f(x)$ in $\fq[x]$ is a  polynomial not necessarily separable.  

First, we completely characterize  all non-special divisors of degree $g-1$ and $g$  whose supports consist of  totally ramified places of $\fq(\cX)/\fq(x)$. Our approach is based on  the  explicit description of the set of absolute maximal elements for the generalized Weierstrass semigroup of the extension \eqref{kummergeneral},  as provided  in  \cite{CMT2026}. By combining this with  results from \cite{MTT2019} and \cite{CMS2025} (see Theorem \ref{teo_Riemann_Roch_dimension} and Corollary \ref{coro1}, resp.), we obtain a formula for the dimension of certain Riemann-Roch spaces, see Theorem \ref{teo_dimension_RRS_Kummer}.
A key advantage of this approach, compared to using the standard Weierstrass semigroups, is that it facilitates the direct study of divisors of degree $g-1$ and $g$ that are not necessarily effective. In Theorems \ref{teo_g-1_Kummer} and \ref{carac-grad-g-1} an explicit description of non-specials divisors of degree $g-1$ for certain Kummer extensions can be found.

Second, we apply these newly obtained  non-special divisors of small degree to derive new families of LCPs AG codes over Kummer extensions. In doing so,  we generalize the constructions of non-special divisors of small degree and LCPs of AG codes previously established in \cite{CLM2024}  and   \cite{CMQ2025}, respectively.

This paper is organized as follows. Section~$2$ reviews preliminaries on function fields, generalized Weierstrass semigroups, Kummer extensions, AG codes, and LCPs of AG codes. Section~$3$ presents a formula for the dimension of the Riemann-Roch space of a divisor supported on a set of totally ramified places of the Kummer extension~\eqref{kummergeneral}; see Theorem~\ref{teo_dimension_RRS_Kummer}. This formula is then used to establish arithmetic criteria to determine whether such divisors are non-special of degrees $g-1$ and $g$; see Theorems~\ref{teo-car-div-deg-g-1} and~\ref{teo2-car-div-deg-g}, respectively. Based on these results, Section~$4$ gives an explicit description of all non-special divisors of degree $g-1$ supported on a set of totally ramified places, under specific conditions on the multiplicities of $f(x)$. Finally, Section~$5$ presents new and generalized constructions of LCPs of AG codes, see Theorems \ref{Teocodes1}, \ref{Teocodes2} and \ref{TeocodesR}, and some explicit examples.

\section{Preliminaries and Notation}

Throughout this article, we let $q$ be the power of a prime $p$ and $\fq$ be the finite field with $q$ elements. For $a$ and $b$ integers, we denote by $(a, b)$ the greatest common divisor of $a$ and $b$, and by $b \bmod{a} $ the smallest non-negative integer congruent with ($b$ modulo $a$). For $c\in \R$, we denote by $\floor*{c}$ and $\ceil*{c}$  the floor and  ceiling functions of $c$, respectively. We also denote by $\N$ the set of non-negative integers. 


\subsection{Functions fields and generalized Weierstrass semigroups}

Let  $F/\fq$ be a function field in one variable of genus $g$ and constant field $\fq$. We denote by $\mathcal P_{F}$ the set of places in $F$, and by $\nu_{P}$ the discrete valuation of $F/\fq$ associated to the place $P\in \cP_{F}$. We say a  place is rational if $\deg (P)=1$.  Let  $\Div (F)$ be the free abelian group generated by the places in  $\mathcal P_{F}$. An element $G=\sum_{i=1}^s n_iP_i$ in $\Div (F)$ is a divisor and its degree is given by $\deg(G)=\sum_{i=1}^s n_i\deg (P_i)$. For a function $z \in F$ we let $(z), (z)_\infty,$ and $(z)_0$ stand for the principal, pole, and zero divisors of the function $z$, respectively. The Riemann-Roch space associated to the divisor $G\in \Div(F)$ of $F/\fq$  is defined by
$$
\cL(G)=\{z\in F: (z)+G\geq 0\}\cup \{0\},
$$  
and we denote by $\ell(G)$ its dimension as a vector space over $\fq$. From Riemann Theorem we have $\ell(G)\geq \deg (G)  +1-g.$
A divisor $G$ is called a {\it non-special divisor} if $$\ell(G)=\deg (G) +1-g;$$ otherwise, $G$ is called {\it special}.  

The existence and characterization of non-special divisors are fundamental to construct LCPs codes from algebraic geometry codes. A connection between non-special divisors of degrees $g$ and $g-1$ can be derived from the following results. 
	\begin{lemma}\cite[Lemma 3]{BL2006}\label{lemma1}
		If $G$ is an effective non-special divisor of degree $g$  on a function field $F$ 
		and  $P$ is a rational place such that $P \not\in \supp(G)$, then $G-P$ is non-special (of degree $g-1$).
	\end{lemma}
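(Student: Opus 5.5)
The plan is to use only the Riemann--Roch theorem together with the elementary inequality $\ell(A-P)\ge \ell(A)-1$. The latter holds for any divisor $A$ and any rational place $P$, since $\cL(A-P)$ is the kernel of evaluation at $P$ on $\cL(A)$ (after twisting by a uniformizer). Because $G$ is non-special of degree $g$, we have $\ell(G)=g+1-g=1$. Since $G$ is effective, $\cL(G)$ contains the constants, and hence $\cL(G)=\fq$.

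Next I will compute $\ell(G-P)$ directly. If $z\in\cL(G-P)\subseteq\cL(G)=\fq$, then $z$ is a constant. A nonzero constant $z$ would satisfy $(z)+G-P=G-P\ge 0$, and this fails because $P\notin\supp(G)$: the coefficient of $P$ in $G-P$ is $-1$. Thus $\cL(G-P)=\{0\}$, so $\ell(G-P)=0=\deg(G-P)+1-g$. This is exactly the statement that $G-P$ is non-special of degree $g-1$.

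There is essentially no obstacle. The only points that need care are the use of effectivity, which guarantees that $\fq\subseteq\cL(G)$, and the hypothesis $P\notin\supp(G)$, which is what removes the constants from $\cL(G-P)$. Rationality of $P$ is used only to get $\deg(G-P)=g-1$. For a place of higher degree the same argument gives $\ell(G-P)=0$, but the degree would then not be $g-1$.
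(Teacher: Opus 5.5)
Your proof is correct and is the standard argument behind the cited result of Ballet and Le Brigand; the paper only cites the lemma and gives no proof of its own. You use effectivity and $\ell(G)=1$ to get $\cL(G)=\fq$, and then $P\notin\supp(G)$ to exclude the nonzero constants from $\cL(G-P)\subseteq\cL(G)$; the inequality $\ell(A-P)\ge \ell(A)-1$ that you announce at the start is never actually used and can be dropped.
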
   
	
	\begin{lemma}\cite[Remark 1.6.11]{S2009}   	
		If $G$ is a non-special divisor of degree $g-1$  on a function field $F$ 
		and $P$ is a rational place, then $G+P$ is non-special (of degree $g$).	
	\end{lemma}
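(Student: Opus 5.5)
The plan is to compare $\ell(G)$ and $\ell(G+P)$ directly, using Riemann's inequality as the lower bound and the elementary fact that adding a single rational place raises the dimension by at most one as the upper bound.

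Since $G$ is non-special of degree $g-1$, we have $\ell(G)=\deg(G)+1-g=0$. The divisor $G+P$ has degree $g$, so Riemann's inequality gives
$$\ell(G+P)\ge \deg(G+P)+1-g = 1.$$

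For the upper bound, I will show $\ell(G+P)\le \ell(G)+\deg(P)=\ell(G)+1$. Take a local parameter $t$ at $P$ and set $n=\nu_P(G)+1$. Every $z\in\LL(G+P)$ satisfies $\nu_P(z)\ge -n$, so $t^{n}z$ has nonnegative valuation at $P$. The map
$$\LL(G+P)\to \fq, \qquad z\mapsto (t^{n}z)(P),$$
is $\fq$-linear, because $P$ is rational and so its residue field is $\fq$. Its kernel consists of those $z$ with $\nu_P(z)\ge -n+1=-\nu_P(G)$, and the valuations of $z$ at the other places are already controlled by $G+P$. Hence the kernel is exactly $\LL(G)$, and $\ell(G+P)\le \ell(G)+1=1$.

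Combining the two bounds gives $\ell(G+P)=1=\deg(G+P)+1-g$, so $G+P$ is non-special of degree $g$. The only point needing care is the kernel identification, which is routine. Alternatively, one can cite \cite[Lemma 1.4.8]{S2009} for the inequality $\ell(A)\le \ell(B)+\deg A-\deg B$ when $B\le A$.
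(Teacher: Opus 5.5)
Your proof is correct: Riemann's inequality gives $\ell(G+P)\ge 1$, and your kernel computation (equivalently Stichtenoth's Lemma~1.4.8) gives $\ell(G+P)\le \ell(G)+1=1$. The paper gives no proof of its own and just cites Stichtenoth's Remark~1.6.11. There the statement is the special case $B=G+P\ge G$ of the fact that a divisor dominating a non-special divisor is non-special, which rests on the same inequality $\ell(B)-\ell(A)\le \deg B-\deg A$ for $A\le B$, so your route is essentially the standard one.
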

    We observe that both hypotheses in Lemma \ref{lemma1}, $G$ is effective and that $P \not\in \supp(G)$, are necessary. In fact, let $\cX$ be an hyperelliptic curve of genus $g$ and $P$ be a rational place on $\cX$ with Weierstrass semigroup $H(P)=\{0, g+1, g+2, g+3, \dots \}$. Then $\ell(gP)=\ell((g-1)P)=1$, so $gP$ is an effective non-special divisor, but $(g-1)P$ is special. Now consider the curve $y^4=x^3+x$ over $\mathbb{F}_9$ of genus $3$, let $Q_\infty$ be the only place at infinity, and $Q_1, Q_2$ and $Q_3$ be the only places in $\mathbb{F}_9(x,y)$ over the zeros of $x^3+x$. Then  $G=-2Q_1+2Q_2+3Q_3$ is a non-special (non-effective) divisor of degree $3$, but $G-Q_\infty$ is a special divisor.
	
 In this work, we use the theory of generalized Weierstrass semigroups at several places and Riemann-Roch spaces to characterize non-special divisors of degree $g$ and $g-1$ in Kummer extensions. Let $\Q=(Q_1, \dots, Q_n)$ be an $n$-tuple of pairwise distinct rational places in $F$. The Weierstrass semigroup $H(\Q)$ of $F$ at $\Q$ is defined by
$$
H(\mathbf{Q}) := \left\{(a_{1}, \ldots, a_{n}) \in \mathbb{N}^ {n} :  (z)_{\infty} = \textstyle\sum_{i=1}^ {n} a_{i}Q_{i} \text{ for some }z\in F \right\}.
$$
We notice that for $n=1$ ones retrieve the classical Weierstrass semigroup in one rational place. These semigroups have been intensively used on the construction of algebraic geometry codes, see for instance, \cite{GL92, GKL93, CT05, M05, HY20, CMQ2025 }.  The elements in the finite complement ${G(\mathbf{Q})=\N^n\setminus H(\mathbf{Q})}$ are called \emph{gaps} of $F$ at $\mathbf{Q}$ and a characterization can be fully articulated in terms of the dimensions of certain associated Riemann-Roch spaces. In fact,  given an $n$-tuple $\negalpha=(\al_1, \dots, \al_n)\in \N^n$ consider the divisor defined by 
 \begin{equation}\label{defDalpha}
 D_\negalpha(\Q) = \alpha_1 Q_1 + \cdots + \alpha_n Q_n.
 \end{equation}
  Then $\negalpha$ is an element in $G(\Q)$ if and only if
 $$\ell(D_{\negalpha}(\Q)) = \ell(D_{\negalpha}(\Q) - Q_i) \text{ for some } 1 \leq i \leq n.$$

The notion of Weierstrass semigroups at several rational places was extended to $n$-tuples in $\mathbb{Z}^n$ by considering the ring $R_{\mathbf{Q}}$ of functions on $F$ that are regular outside the finite set of places ${Q_1, \dots, Q_n}$. This subject was investigated by Delgado in  \cite{D1990} over algebraically closed fields and, by Beelen and Tuta\c{s} over finite fields (see \cite{BT2006}). The generalized Weierstrass semigroup $\widehat{H}(\Q)$ of $F$ at $\Q$ is given by
$$
\widehat{H}(\Q):=\{(-\nu_{Q_1}(z),\dots ,-\nu_{Q_n}(z))\in \Z^n : z \in R_\Q\setminus\{0\}\}.
$$
The
advantage of this approach over the Weierstrass semigroup in several places is that it will allow us to study non-effective
divisors as well, a key point for the classification of non-special divisors of degree $g-1$. 

 Provided that $q \geq n$, we have that the Weierstrass semigroup of $F$ at $\Q$ can be obtained by the relation $H(\Q)=\widehat{H}(\Q)\cap \N^n$, see \cite[Proposition 2]{BT2006}.

An important concept in the study of $\widehat{H}(\Q)$,
is that of absolute maximal elements. 
These elements play a crucial role in describing the generalized Weierstrass semigroup $\widehat{H}(\Q)$, see \cite[Theorem 3.4]{MTT2019}. To introduce this concept, we need first to introduce some notation. Let $I:=\{1,\ldots,n\}$. For $i\in I$, a nonempty subset $J\subsetneq I$, and $\negalpha=(\alpha_1,\ldots,\alpha_n)\in \Z^n$,   denote

\begin{itemize}
	\item [$\bullet$] $\overline{\nabla}_J (\negalpha):=\{\negbeta\in \Z^n : \be_j=\al_j \text{ for }j\in J \text{ and } \be_i < \al_i \text{ for }i\notin J\}$,
	\vspace{0.2cm}
	\item [$\bullet$] $\nabla_J(\negalpha):=\overline{\nabla}_J(\negalpha)\cap \widehat{H}(\Q)$,
	\vspace{0.2cm}
	\item [$\bullet$] $\overline{\nabla}(\negalpha):=\cup_{i=1}^{n}\overline{\nabla}_i(\negalpha)$, where $\overline{\nabla}_i(\negalpha):= \overline{\nabla}_{\{i\}}(\negalpha)$, and\vspace{0.2cm}
	\item [$\bullet$] $\nabla(\negalpha):=\overline{\nabla}(\negalpha)\cap \widehat{H}(\Q)$.
\end{itemize}

\begin{definition} \label{defi maximals}
	An element $\negalpha\in \widehat{H}(\Q)$ is called maximal if $\nabla(\negalpha)=\emptyset$. If moreover $\nabla_J(\negalpha)=\emptyset$ for every $J\subsetneq I$ with $|J| \geq 2$, we say that $\negalpha$ is absolute maximal. The set of absolute maximal elements in $\widehat{H}(\Q)$ is  denoted by $\widehat{\Gamma}(\Q)$.
\end{definition}

The absolute maximal elements in $\widehat{H}(\Q)$ are closely related to the dimension of the  Riemann-Roch spaces $\ell(D_{\negalpha}(\Q))$ as
established below. For that purpose at first we introduce the following  partial order in $\Z^n$, for $\negbeta $ and $ \negalpha $ in $\Z^n$ we say that
$$\negbeta \leq \negalpha  \Leftrightarrow \be_i\leq\al_i \text{ for all } i=1, \dots, n.$$ 
For $\negalpha\in \Z^n$, define
$$
\widehat{\Gamma}_{\Q}(\negalpha):=\{\negbeta\in \widehat{\Gamma}(\Q): \negbeta \leq \negalpha\}.
$$
Note that $\widehat{\Gamma}_{\Q}(\negalpha)$ is a finite subset of
$\Z^n$. For $i\in\{1, \dots, n\}$ and $\negbeta, \negbeta' \in\widehat{\Gamma}_{\Q}(\negalpha)$, define
$$
\negbeta \equiv_i \negbeta'\quad\text{if and only if}\quad \be_i=\be_i'.
$$
A straightforward verification shows that $\equiv_i$ is an equivalence relation in $\widehat{\Gamma}_{\Q}(\negalpha)$. We denoted by $[\negbeta]_i$  the equivalence classes for $\negbeta\in \widehat{\Gamma}_{\Q}(\negalpha)$ in the
quotient set $\widehat{\Gamma}_{\Q}(\negalpha)/\equiv_i$. The following theorem provides the dimension of the Riemann–Roch space associated to divisor $D_{\negalpha}(\Q)$ in terms of $\widehat{\Gamma}_{\Q}(\negalpha)$.

\begin{theorem}\cite[Theorem 3.5]{MTT2019}\label{teo_Riemann_Roch_dimension}
	Let $\negalpha\in \Z^n$, $i\in \{1, \dots, n\}$, and assume that $q\geq n$. Then
	$$
	\ell(D_{\negalpha}(\Q))=|\widehat{\Gamma}_{\Q}(\negalpha)/\equiv_i|.
	$$
\end{theorem}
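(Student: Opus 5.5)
We prove the statement $\ell(D_{\negalpha}(\Q))=|\widehat{\Gamma}_{\Q}(\negalpha)/\equiv_i|$ by exhibiting a filtration of $\cL(D_{\negalpha}(\Q))$ whose jumps are in bijection with the distinct $i$-th coordinates of elements of $\widehat{\Gamma}_{\Q}(\negalpha)$.

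\textbf{Step 1 (the filtration and its jumps).} Fix $i$. Consider the chain of subspaces
$$
\cL\bigl(D_{\negalpha}(\Q)-(\alpha_i-t)Q_i\bigr),\qquad t\le \alpha_i .
$$
For $t\ll 0$ the divisor has negative degree, so the space is zero. Each step $t-1\to t$ raises the dimension by at most one, since $Q_i$ is rational. Hence $\ell(D_{\negalpha}(\Q))$ equals the number of \emph{jump values} $t\le\alpha_i$. A value $t$ is a jump exactly when there is $z\in\cL(D_{\negalpha}(\Q))$ with $-\nu_{Q_i}(z)=t$. Such a $z$ may have poles outside $\{Q_1,\dots,Q_n\}$ only if $D_{\negalpha}(\Q)$ allows it; it does not, since the support of $D_{\negalpha}(\Q)$ lies in these places. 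So $z\in R_\Q$, and $\negbeta(z):=(-\nu_{Q_j}(z))_j$ lies in $\widehat{H}(\Q)$ with $\negbeta(z)\le\negalpha$. Thus
$$
\ell(D_{\negalpha}(\Q))=\bigl|\{\beta_i:\negbeta\in\widehat{H}(\Q),\ \negbeta\le\negalpha\}\bigr|.
$$

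\textbf{Step 2 (reduction to absolute maximal elements).} It remains to show that the set of $i$-th coordinates of elements of $\widehat{H}(\Q)$ below $\negalpha$ equals the set of $i$-th coordinates of elements of $\widehat{\Gamma}_{\Q}(\negalpha)$. One inclusion is trivial.

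For the other, take $\negbeta\in\widehat{H}(\Q)$ with $\negbeta\le\negalpha$. We want an absolute maximal $\negbeta'$ with $\beta'_i=\beta_i$ and $\negbeta'\le\negalpha$. Among all elements of $\widehat{H}(\Q)$ with $i$-th coordinate $\beta_i$ and lying $\le\negalpha$, choose one that is maximal for the partial order; equivalently, maximize $\sum_j\beta_j$. This set is nonempty and the sum is bounded above, so such an element exists.

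We must check that a maximal choice $\negbeta'$ is absolute maximal, i.e.\ that $\nabla_J(\negbeta')=\emptyset$ for every proper nonempty $J$. Suppose instead that $\negbeta''\in\nabla_J(\negbeta')$. The plan is to use the standard ``combination'' property of generalized Weierstrass semigroups, available because $q\ge n$ (as in \cite{BT2006}): if $\negbeta',\negbeta''\in\widehat{H}(\Q)$, then their coordinatewise maximum lies in $\widehat{H}(\Q)$, realized by $z'+cz''$ for a suitable $c\in\fq$ (this is where $q\ge n$ is needed to avoid cancellations). Applying this to $\negbeta'$ and elements obtained by shifting, one produces an element of $\widehat{H}(\Q)$ that agrees with $\negbeta'$ on $J$ and has some other coordinate pushed up toward $\negalpha$. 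Alternatively, one uses the decreasing argument of \cite[Theorem 3.4]{MTT2019}: each element of $\widehat{H}(\Q)$ is a maximum of absolute maximal elements. That representation is exactly what we need: writing $\negbeta=\max(\negbeta^{(1)},\dots,\negbeta^{(r)})$ with $\negbeta^{(k)}\in\widehat{\Gamma}(\Q)$, each $\negbeta^{(k)}\le\negbeta\le\negalpha$, and some $k$ attains $\beta^{(k)}_i=\beta_i$.

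\textbf{Step 3 (conclusion).} With Step 2, the jump values are exactly the classes of $\equiv_i$, which proves the formula.

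The main obstacle is Step 2, namely the representation of elements of $\widehat{H}(\Q)$ as maxima of absolute maximal elements. I would rely on \cite[Theorem 3.4]{MTT2019} for this. If that is unavailable, I would argue by descending induction on $\sum_j(\alpha_j-\beta_j)$, repeatedly using the lcm-type closure property to replace a non-absolute-maximal element by one dominating it off $J$.
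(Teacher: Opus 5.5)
The paper gives no proof of this statement; it is quoted from \cite{MTT2019}. Your Step~1 is correct: the valuation filtration at $Q_i$ gives $\ell(D_{\negalpha}(\Q))=|\{\beta_i:\negbeta\in\widehat{H}(\Q),\ \negbeta\le\negalpha\}|$. Granting \cite[Theorem 3.4]{MTT2019} in the form you quote (every element of $\widehat{H}(\Q)$ is a coordinatewise maximum of absolute maximal elements), Step~2 is also complete. Each constituent is $\le\negbeta\le\negalpha$, and one of them has $i$-th coordinate $\beta_i$. So the proposal does prove the statement, along the natural route.

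The self-contained arguments you sketch first in Step~2 and as a fallback, however, go in the wrong direction and would not work. The sets $\nabla_J(\negbeta)$ consist of elements lying \emph{below} $\negbeta$. One checks that $\negbeta\in\widehat{H}(\Q)$ is absolute maximal if and only if, for every $j$, it is \emph{minimal} for $\le$ in $\{\negbeta''\in\widehat{H}(\Q):\beta''_j=\beta_j\}$.

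Hence maximizing $\sum_j\beta_j$ selects the wrong element. Take $n=2$, $i=1$ and $\negalpha=(c,A)$ with $A\gg0$. Riemann--Roch gives $(c,A),(c,A-1)\in\widehat{H}(\Q)$, so your choice is $(c,A)$. Yet $(c,A-1)\in\nabla_1(c,A)$, so $(c,A)$ is not even maximal. For the same reason, the lub-closure property only produces larger elements and can never make a set $\nabla_J$ empty. Your fallback, decreasing $\sum_j(\alpha_j-\beta_j)$ by passing to dominating elements, moves the wrong way too.

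The repair is to \emph{minimize} $\sum_k\beta'_k$ over $S=\{\negbeta'\in\widehat{H}(\Q):\negbeta'\le\negbeta,\ \beta'_i=\beta_i\}$. This set is finite because $\sum_k\beta'_k\ge0$ on $\widehat{H}(\Q)$, a principal divisor having degree $0$. Then use cancellation of leading terms instead of lubs. Suppose $\negbeta''\in\nabla_J(\negbeta')$.
\begin{itemize}
\item If $i\in J$, then $\negbeta''\in S$ has smaller coordinate sum.
\item If $i\notin J$, realize $\negbeta',\negbeta''$ by $z',z''\in R_\Q$, fix $j\in J$, and pick $c\in\fq^*$ with $\nu_{Q_j}(z'-cz'')>\nu_{Q_j}(z')$. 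Then $w=z'-cz''$ has $-\nu_{Q_k}(w)=\beta'_k$ for $k\notin J$, in particular for $k=i$. It also has $-\nu_{Q_k}(w)\le\beta'_k$ for $k\in J$, strictly for $k=j$. So again an element of $S$ with smaller sum appears.
\end{itemize}
Thus the minimizer lies in $\widehat{\Gamma}_{\Q}(\negalpha)$ and has $i$-th coordinate $\beta_i$. This argument uses only one cancellation at a time, so it does not even need $q\ge n$.
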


\subsection{Kummer extensions}
Let $m\geq 2$ be a integer such that $\char(\fq) \nmid m$, and $f(x)\in \fq[x]$ be a polynomial with $\deg(f)\geq 2$  that is not a $d$-th power of an element in $\fq(x)$, where $2 \leq d$ divides $m$. Assume that all roots of $f(x)$ are in $\fq$. Consider the algebraic curve over $\fq$ defined by the affine equation
\begin{equation}\label{equationX}
\cX: \quad y^m=f(x)
\end{equation}
with function field $\fq(\cX)$ of genus $g(\cX) \geq 1$. Then $\fq(\cX)/\fq(x)$ is a Kummer extension. We fix the following notation for this Kummer extension:

\begin{itemize}
\item $P_1, P_2, \dots, P_r\in \cP_{\fq(x)}$ are all the places corresponding to the zeros and the pole of $f(x)$ in $\fq(x)$ (notice that $r\leq \deg(f)+1$), 
\vspace{0.2cm}
\item $\la_{k}:=\nu_{P_k}(f(x))$ is the multiplicity of $P_k$ in $f(x)$, where $\nu_{P_k}$ is the valuation associated to the place $P_k$,
\item Suppose $n \leq r$ and $P_1, P_2, \dots, P_n\in \cP_{\fq(x)}$ are pairwise distinct  totally ramified places in the extension $\fq(\cX)/\fq(x)$. For $k=1, \dots, n$ we denote by $Q_k\in \cP_{\fq(\cX)}$  the unique extension in $\fq(\cX)$ over $P_k$,   and by $\Q:=(Q_1, Q_2, \dots, Q_n)$. Note that in case the only pole of $f(x)$ is totally ramified in $\fq(\cX)/\fq(x)$ it can also appear in the $n$-tuple $\Q=(Q_1, Q_2, \dots, Q_n)$.
\end{itemize}

In \cite{CMS2025}, the authors provided an explicit and compact description of the gap set $G(Q)$ at any totally ramified place $Q\in \cP_{\fq(\cX)}$ in the extension $\fq(\cX)/\fq(x)$. To achieve this, for each $1 \leq i \leq m-1$ and $1 \leq s \leq n$, they defined the following functions:
\begin{equation}\label{funcoestsbi}
t_s(i):= (i\la_s)\bmod m\quad \text{ and }\quad \be(i):=\sum_{k=1}^{r}\ceil*{\frac{i\la_k}{m}}-1.
\end{equation}

The Weierstrass Gap Theorem states that the number of gaps in the semigroup of a rational place is precisely the genus of the curve. As a consequence, they have the following result.

\begin{corollary}\label{coro1}\cite[Corollary 3.3]{CMS2025}
Suppose $\cX$ is as in (\ref{equationX}) and that $\fq(\cX)/\fq(x)$ has  at least one totally ramified place. Then \begin{equation}\label{erikethaneq} \sum_{i=1}^{m-1}\be(i)=g(\cX),\end{equation} where  $g(\cX)$ is the genus of the function field $\fq(\cX)$.
\end{corollary}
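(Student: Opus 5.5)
Since there is a totally ramified place $Q$, I would argue through the Weierstrass Gap Theorem: the number of gaps of $H(Q)$ equals $g(\cX)$. It then suffices to show that the number of gaps at $Q$ equals $\sum_{i=1}^{m-1}\be(i)$. I would obtain the gap count from the explicit description of $G(Q)$ in \cite{CMS2025}. As a cross-check, I would also compute the genus directly from Riemann--Hurwitz.

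For the direct route, I would first compute an integral basis of the Kummer extension. Consider the functions
$$z_i = \frac{y^i}{\prod_{k} (x-a_k)^{\floor*{i\la_k/m}}}, \qquad 0\le i\le m-1,$$
where the product runs over the finite places among $P_1,\dots,P_r$. At a place over $P_k$ with ramification index $e_k=m/(m,\la_k)$, the valuation of $z_i$ is $e_k\bigl(i\la_k/m-\floor*{i\la_k/m}\bigr)\ge 0$. So these functions are regular away from infinity and have distinct "fractional parts" modulo $m$. This lets us write $\LL(sQ)$, for $Q$ totally ramified, as a direct sum over $i$ of spaces $z_i\cdot \fq[x]$ with degree bounds.

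For each $i$, the pole order at $Q$ of $z_i\cdot h(x)$ then runs over a single residue class modulo $m$. It starts from a minimal value, and the number of gaps in that residue class equals the number of admissible non-negative multiples of $m$ missing below that minimum. Computing this minimum via the valuation at $Q$ and the divisor of $z_i$ should give exactly $\sum_k\ceil*{i\la_k/m}-1=\be(i)$ gaps in class $i$. Here the ceiling arises because the total degree of $(z_i)$ is zero and the fractional contributions sum to the ceiling. Summing over $i=1,\dots,m-1$ yields the number of gaps, hence $g(\cX)$.

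Alternatively, I would verify the identity via Riemann--Hurwitz: $2g-2=-2m+\sum_{k=1}^r\bigl(m-(m,\la_k)\bigr)$. Using $\sum_{i=1}^{m-1}\ceil*{i\la/m}=\frac{(m-1)\la+m-(m,\la)}{2}$ and $\sum_k\la_k=0$ (the pole included with $\la_k=-\deg f$), we get
$$\sum_i\be(i)=\sum_k\frac{m-(m,\la_k)}{2}-(m-1)=g.$$
The main obstacle is the ceiling-sum identity, which is a standard reciprocity-type computation: pair $i$ with $m-i$. Also note that the pole must be treated with negative $\la$, and $\ceil*{\cdot}$ handles sign correctly there. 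The hypothesis that a totally ramified place exists guarantees $\fq$ is the full constant field, so the genus formula applies.
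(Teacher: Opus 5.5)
Your primary route, counting the gaps of $H(Q)$ at a totally ramified place with the explicit gap set of \cite{CMS2025} and then applying the Weierstrass Gap Theorem, is exactly how the paper justifies the corollary.

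Your Riemann--Hurwitz ``cross-check'' is different, and it is a complete proof on its own. The identity $\sum_{i=1}^{m-1}\lceil i\lambda/m\rceil=\frac{(m-1)\lambda+m-(m,\lambda)}{2}$ holds for every integer $\lambda$, including the negative pole exponent. To see it, write $\lceil z\rceil=z+\{-z\}$ and note that $-i\lambda \bmod m$ runs $(m,\lambda)$ times through the multiples of $(m,\lambda)$. Combined with $\sum_{k=1}^{r}\lambda_k=0$ it gives
\[
\sum_{i=1}^{m-1}\beta(i)=1-m+\tfrac12\sum_{k=1}^{r}\bigl(m-(m,\lambda_k)\bigr)=g(\mathcal{X}).
\]
This route is more direct. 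It needs no semigroup theory and uses the totally ramified place only to guarantee that $\mathbb{F}_q$ is the exact constant field. The paper's route instead carries finer information: for $Q=Q_s$, $\beta(i)$ is the number of gaps of $H(Q_s)$ congruent to $t_s(i)$ modulo $m$.

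Your sketched derivation of the gap set via the $z_i$ has loose ends, though neither route depends on it.
\begin{itemize}
\item Regularity of the $z_i$ and distinctness of their valuations at $Q$ only show that $\bigoplus_i\mathbb{F}_q[x]z_i$ is contained in the integral closure of $\mathbb{F}_q[x]$. They do not show equality. For that you need something like projection onto $\mu_m$-eigenspaces after adjoining $m$-th roots of unity.
\item Describing $\mathcal{L}(sQ)$ through $\bigoplus_i z_i\mathbb{F}_q[x]$ only works when $Q$ is the place at infinity. A finite $Q$ must first be moved to infinity by a M\"obius change of $x$, which leaves $\beta$ unchanged.
\item With $\lambda_\infty=-\deg f$, the pole orders of $z_ih(x)$ lie in the class of $(m-i)\lambda_\infty$ modulo $m$. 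They account for $\beta(m-i)$ gaps, not $\beta(i)$. This is a harmless reindexing that does not change the sum.
\end{itemize}
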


On the other hand, in \cite{CMT2026} the authors provided an explicit description of the set of absolute maximal elements $\widehat{\Gamma}(\Q)$ of the generalized Weierstrass semigroup $\widehat{H}(\Q)$.

\begin{theorem}\label{teo_widehat_Gamma}\cite[Theorem 3.4]{CMT2026}
	For a Kummer extension as in (\ref{equationX}), let $2\leq n \leq q$ and $\Q=(Q_{1}, Q_{2}, \dots, Q_{n})$. Then
	\begin{align*}
		\widehat{\Gamma}(\Q)=&\Bigg\{(mj_1+t_{1}(i), \dots,  mj_n+t_{n}(i)):
		\begin{array}{l}
			1\leq i \leq m-1,\,\, j_1, \dots, j_n\in \Z, \\
			j_1+\cdots +j_n=\beta(i)+1-n
		\end{array}
		\Bigg\}\\
		& \bigcup \Bigg\{(mj_1, mj_2, \dots, mj_n):\, j_1, j_2, \dots, j_n \in \Z, \, \,   j_1+j_2+\cdots+j_n=0 \Bigg\}.
	\end{align*}
\end{theorem}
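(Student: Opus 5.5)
The plan is to pass from the function field $\fq(\cX)$ to the rational function field $\fq(x)$, using the isotypic decomposition of Riemann--Roch spaces under the Kummer automorphism $y\mapsto \zeta y$. We combine this with a dimension-type characterization of absolute maximal elements.

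First I would fix the convention for the components. Every $z\in R_\Q$ can be written uniquely as $z=\sum_{i=0}^{m-1}h_i(x)\,y^{-i}$ with $h_i\in\fq(x)$; this is the choice of $y^{-i}$ versus $y^{i}$ under which the residues $t_s(i)$ appear. Each $P_s$ with $s\le n$ is totally ramified, so $(\la_s,m)=1$. Hence the valuations $\nu_{Q_s}(h_i y^{-i})=m\,\nu_{P_s}(h_i)-i\la_s$ lie in pairwise distinct residue classes modulo $m$ as $i$ varies. It follows that $-\nu_{Q_s}(z)=\max_i\{-\nu_{Q_s}(h_iy^{-i})\}$, and the $i$-th summand contributes a pole order congruent to $t_s(i)$ modulo $m$. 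At the non-listed places $P_k$, $k>n$ (including possibly the pole of $f$), regularity of $h_iy^{-i}$ is a condition on $\nu_{P_k}(h_i)$ alone.

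With this bookkeeping, for any $\negalpha\in\Z^n$ we get
$$\cL(D_\negalpha(\Q))=\bigoplus_{i=0}^{m-1} y^{-i}\,\cL_{\fq(x)}(E_i(\negalpha)),$$
where $E_i(\negalpha)$ is an explicit divisor on $\fq(x)$. Its coefficient at $P_s$ ($s\le n$) is $\floor*{(\al_s-t_s(i))/m}$, suitably adjusted by $\floor*{i\la_s/m}$. At the remaining $P_k$ and at the other places it is the integer forced by $\ceil*{i\la_k/m}$. Summing over the non-listed places produces exactly the quantity $\be(i)$ up to the terms from $P_1,\dots,P_n$.

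For $i=0$ we use the separate convention in which $D_\negalpha$ only sees multiples of $m$. Thus $\ell(D_\negalpha(\Q))=\sum_i\max\{0,\deg E_i(\negalpha)+1\}$. I would sanity-check this against Corollary~\ref{coro1}, since for large $\negalpha$ it must give $\deg D_\negalpha+1-g$.

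Next I would use the characterization of absolute maximal elements from \cite{MTT2019}. The element $\negalpha$ is absolute maximal iff $\negalpha\in\widehat H(\Q)$ and $\cL(D_\negalpha-\mathbf 1)$ has codimension one in $\cL(D_\negalpha)$, where $\mathbf 1=(1,\dots,1)$. This codimension-one condition must hold with every proper intermediate divisor $D_\negalpha-\sum_{j\notin J}Q_j$ already equal to $\cL(D_\negalpha-\mathbf 1)$. Concretely, there is a function in $\cL(D_\negalpha)$ with exact pole order $\al_s$ at every $Q_s$ simultaneously, and none with exact pole order at some but not all of the $Q_s$. In the decomposition, lowering coordinate $s$ by one changes $E_i$ only for the unique $i$ with $\al_s\equiv t_s(i)\pmod m$, and then lowers its degree by one.

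So absolute maximality forces all $\al_s$ to lie in the same residue pattern $(t_1(i),\dots,t_n(i))$ for a single $i$ (or all $\equiv0$ for $i=0$). It also forces the $\fq(x)$-space of that component to be nonzero but to vanish after lowering any single coordinate. By Riemann--Roch on $\fq(x)$, this happens exactly when $\deg E_i(\negalpha)=0$.

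Writing $\al_s=mj_s+t_s(i)$, the condition $\deg E_i=0$ unwinds to $j_1+\cdots+j_n=\be(i)+1-n$, and to $\sum j_s=0$ in the case $i=0$. Conversely, for such $\negalpha$ the component is spanned by a single function $h_i(x)y^{-i}$ with exactly the prescribed poles. This shows membership in $\widehat H(\Q)$ and both maximality conditions.

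The main obstacle I expect is the exact bookkeeping in defining $E_i(\negalpha)$. This means getting the floor/ceiling contributions at the places $P_k$ with $k>n$ (including the infinite place, totally ramified or not) to sum to precisely $\be(i)+1$. It also means ensuring the hypothesis $n\le q$ is used correctly: it supplies enough rational places for the theorem of \cite{MTT2019} and for separating functions.
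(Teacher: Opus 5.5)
The paper contains no proof of this statement: it is quoted from \cite{CMT2026}. The paper then combines it with Theorem~\ref{teo_Riemann_Roch_dimension} from \cite{MTT2019} to \emph{derive} the dimension formula of Theorem~\ref{teo_dimension_RRS_Kummer}. Your argument is correct and runs in the opposite direction.

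Your decomposition gives $E_i(\negalpha)$ with coefficient $\lfloor(\alpha_s-t_s(i))/m\rfloor-\lfloor i\lambda_s/m\rfloor$ at $P_s$ ($s\le n$) and $-\lceil i\lambda_k/m\rceil$ at $P_k$ ($k>n$). For $1\le i\le m-1$ and $s\le n$ we have $m\nmid i\lambda_s$, so $\lfloor i\lambda_s/m\rfloor=\lceil i\lambda_s/m\rceil-1$. Hence
$\deg E_i(\negalpha)=n-1-\beta(i)+\sum_{s=1}^n\lfloor(\alpha_s-t_s(i))/m\rfloor$, so the bookkeeping you flagged as the main obstacle closes up exactly, and you recover Theorem~\ref{teo_dimension_RRS_Kummer} directly.

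Reading off $\widehat{\Gamma}(\Q)$ from this is sound:
\begin{itemize}
\item A component of index $i\neq i_s$ of a function in $\mathcal{L}(D_{\negalpha}(\Q))$ has pole order $<\alpha_s$ at $Q_s$. This forces a single residue pattern.
\item On $\mathbb{F}_q(x)$ with $n\ge 2$, $\ell(E)-\ell(E-P_1-\cdots-P_n)=1$ holds iff $\deg E=0$.
\end{itemize}

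What your route buys is a self-contained, elementary proof. It makes Theorem~\ref{teo_dimension_RRS_Kummer} independent of both imported results. It also never uses $n\le q$: the equivalence between absolute maximality and ``$\ell(D_{\negalpha})=\ell(D_{\negalpha}-\mathbf 1)+1$ and $\mathcal{L}(D_{\negalpha}-Q_s)=\mathcal{L}(D_{\negalpha}-\mathbf 1)$ for all $s$'' follows straight from the definition for any $q$. So your closing expectation about where $n\le q$ enters is off. The paper's route is shorter given the imported results, and it keeps the Kummer case inside the general semigroup framework of \cite{MTT2019}.

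The one step you should write out is componentwise regularity at places above $P_k$ for $k>n$, and above places where $f$ has no zero or pole. There the ramification index is less than $m$, so summands $h_iy^{-i}$ can have equal valuations and cancel. Your sentence ``regularity of $h_iy^{-i}$ is a condition on $\nu_{P_k}(h_i)$ alone'' is true but is not what you need. You need that $z$ is regular there iff every summand is.

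This follows from the eigenspace projections $z\mapsto\frac1m\sum_{j=0}^{m-1}\zeta^{ij}\sigma^j(z)=h_iy^{-i}$, where $\sigma(y)=\zeta y$. These preserve $\mathcal{L}(D_{\negalpha}(\Q))$ because $D_{\negalpha}(\Q)$ is supported on totally ramified places and hence is $\sigma$-invariant. If $m\nmid q-1$, first extend constants to $\mathbb{F}_q(\zeta)$; this changes neither the $h_i$ nor any of the dimensions. Alternatively, use the local integral basis $\{y^j\pi^{-\lfloor j\lambda_k/m\rfloor}\}$ at $P_k$, which gives exactly the bound $\nu_{P_k}(h_i)\ge\lceil i\lambda_k/m\rceil$.
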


	\subsection{ Algebraic geometry codes and LCPs of algebraic geometry codes}
Let $N \geq 1$ be an integer. A {\it linear code} $\C$ over $\fq$ is an $\fq$-subspace of $\fq^N$. Associated to a linear code, we have three parameters: the {\it length} $N$, the {\it dimension} $k$ of $\C$ as an $\fq$-vector space, and the {\it minimum distance} (Hamming distance)
$$d= \min \{ \text{wt}(\textbf{x})  : \textbf{x} \in \C \setminus \{ 0\} \},$$ 
where $\text{wt}(\textbf{x}) = \# \{ i : x_i \neq 0 \}$ for any  $N$-tuple $\textbf{x}=(x_1, \dots, x_N) \in \fq^N$. We denote such code $\cC$ as an $[N,k,d]$, or just $[N,k]$ code.

\begin{definition}A pair of linear codes $\C_1$ and $\C_2$ in $\fq^N$ is said to be a {\it linear complementary pair} (LCP for short) if $\C_1 \oplus \C_2 = \fq^N.$ 
\end{definition}

We now recall the construction of linear algebraic geometry codes. Let $F/\fq$ be a function field of genus $g$. Consider $P_1, \dots, P_N \in \mathcal P_{F}$ pairwise distinct rational places of $F/\fq$ and denote $D :=\sum_{i=1}^N P_i$. Given a divisor $G \in \Div(F)$ with $\Supp (G) \cap \Supp (D) = \emptyset$,  the {\it algebraic geometry code} (AG code for short) $\C_\LL(D,G)$ is defined by 
$$\CL(D,G) := \{ (f(P_1), \cdots , f(P_N)) \, : \, f \in \LL(G) \} \subseteq \fq^N. $$

The parameters of AG codes satisfy the following well-known bounds.	
\begin{proposition}\cite[Theorem 2.2.2]{S2009}\label{parameters} The code $\CL(D, G)$ has parameters $[N, k, d]$ which satisfy $$k=\ell(G)-\ell(G-D) \quad \mbox{and} \quad d \ge N - \deg (G).$$ In particular, for $2g - 2 < \deg (G) < N$ it holds that $k=\ell(G)=\deg(G) + 1 - g$.  
\end{proposition}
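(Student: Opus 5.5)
The statement is the standard parameter result for AG codes, \cite[Theorem 2.2.2]{S2009}, so I would prove it directly from the definitions using only the Riemann–Roch theory recalled above.

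\textbf{Dimension.} I would work with the evaluation map
$$\mathrm{ev}:\LL(G)\to\fq^N,\qquad z\mapsto (z(P_1),\dots,z(P_N)).$$
This is well defined and $\fq$-linear. Indeed, $\Supp(G)\cap\Supp(D)=\emptyset$ and $z\in\LL(G)$ give $\nu_{P_i}(z)\ge -\nu_{P_i}(G)=0$, and each $P_i$ is rational, so $z(P_i)\in\fq$. Its image is $\CL(D,G)$ by definition, so $k=\ell(G)-\dim\ker(\mathrm{ev})$.

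I claim $\ker(\mathrm{ev})=\LL(G-D)$.
\begin{itemize}
\item If $z(P_i)=0$ for all $i$, then $\nu_{P_i}(z)\ge 1$ for each $i$. Since the $P_i$ are outside $\Supp(G)$, this gives $(z)\ge -G+D$, so $z\in\LL(G-D)$.
\item Conversely, if $z\in\LL(G-D)$, then $\nu_{P_i}(z)\ge 1$ for each $i$, so $z(P_i)=0$.
\end{itemize}
Hence $k=\ell(G)-\ell(G-D)$.

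\textbf{Distance.} Take a nonzero codeword $\mathrm{ev}(z)$ of weight $w$, and let $P_{i_1},\dots,P_{i_{N-w}}$ be the places where $z$ vanishes. Then $z\ne 0$ and
$$z\in \LL\Bigl(G-\textstyle\sum_j P_{i_j}\Bigr).$$
A nonzero element of $\LL(A)$ forces $\deg A\ge 0$, because $\deg(z)=0$. So $\deg(G)-(N-w)\ge 0$, that is, $w\ge N-\deg(G)$, and therefore $d\ge N-\deg(G)$.

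\textbf{The special case $2g-2<\deg(G)<N$.} Since $\deg(G-D)=\deg(G)-N<0$, we get $\ell(G-D)=0$, and hence $k=\ell(G)$. Since $\deg(G)\ge 2g-1$, the Riemann–Roch theorem (as recalled in the introduction) gives that $G$ is non-special, so $\ell(G)=\deg(G)+1-g$.

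\textbf{Main obstacle.} None of these steps is hard. The only points needing care are using disjointness of supports both to make evaluation well defined and to identify the kernel with $\LL(G-D)$, and quoting correctly the vanishing of $\ell$ for negative degree and the non-specialness for degree at least $2g-1$.
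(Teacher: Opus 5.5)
Your proof is correct, and it is essentially the standard argument from \cite{S2009} (Theorem 2.2.2 together with Corollary 2.2.3), which the paper only cites and does not reprove. The three steps are the same as there: the kernel of the evaluation map is $\mathcal{L}(G-D)$, a minimum-weight codeword yields a nonzero function in $\mathcal{L}(G-\sum_j P_{i_j})$ and hence a nonnegative degree, and in the range $2g-2<\deg(G)<N$ one has $\ell(G-D)=0$ and $G$ is non-special.
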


For two divisors $A, B$ in $\Div(F)$, we define the {\it greatest common divisor} of $A$ and $B$ by
$$\gcd(A,B) := \sum_{P \in \mathcal{P}_F }\min\{ \nu_P(A), \nu_P(B) \} P,$$
and  the {\it least multiple divisor} of $A$ and $B$  by
$$\lmd(A,B) := \sum_{P \in \mathcal{P}_{F}} \max\{ \nu_P(A), \nu_P(B) \} P.$$
The $\gcd(A,B)$ and $\lmd(A,B)$ fulfill these two fundamental properties:
\begin{equation}\label{basicP}
	\LL(A) \cap \LL(B) = \LL(\gcd(A,B)) \quad \mbox{and} \quad \gcd(A,B)+\lmd(A,B) = A+B.	
\end{equation}

We know introduce the notion of LCP codes in the context of AG  codes.
Let $P_1, \dots, P_N$ be pairwise distinct rational places of $F$ and consider the divisor $D = \sum_{i=1}^N P_i$. Consider $G$ and $H$  two divisors of $F$ such that $$\Supp(G) \cap \Supp(D) = \Supp(H) \cap \Supp(D) = \emptyset.$$ 
The pair $(\CL(D,G), \CL(D, H))$ of AG codes is an LCP of AG codes over $\fq$ if 
$$\CL(D,G) \oplus \CL(D, H) = \fq^N.$$ 
In other words, a pair $(\CL(D,G), \, \CL(D, H))$ of AG codes is an LCP if and only if 
\begin{equation}\label{LCPcond}
	\dim(\CL(D,G)) + \dim(\CL(D,H)) = N \quad \text{ and } \quad \CL(D,G) \cap \CL(D,H) = \{ 0 \}.
\end{equation}

In \cite{BDM2024}, Bhowmick, Dalai, and  Mesnager provided some  conditions on the divisors $G$ and $H$ to obtain a pair of LCP of AG codes. We use Riemann-Roch Theorem and state the theorem as follows.
\begin{theorem}\cite[Theorem 3.5]{BDM2024}\label{thmlcp3.5}. 
	Let $\CL(D,G)$ and $\CL(D,H)$ be two algebraic geometry codes over a function field $F/\fq$ of genus $g \neq 0$. Suppose $D$ has degree $N$ and the divisors $G$ and $H$ satisfy $2g-2 < \deg(G), \, \deg(H) < N$ and   
	\begin{enumerate}[i)]
		\item  $\deg(G)+\deg(H)=N+2g-2$,
		\item $\deg(\gcd(G, H))=g-1$, and 
		\item both divisors $\gcd(G,H)$ and $ \lmd(G,H)- D$ are non-special.
	\end{enumerate}
	Then the pair $(\CL(D,G), \, \CL(D,H))$ is LCP.
\end{theorem}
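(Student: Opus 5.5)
The plan is to verify the two conditions in \eqref{LCPcond} separately: first the dimension count, then the trivial intersection. The main tool is that a non-special divisor of degree $g-1$ has Riemann--Roch dimension $\deg+1-g=0$. Both $\gcd(G,H)$ and $\lmd(G,H)-D$ will turn out to have degree exactly $g-1$, so hypothesis iii) will force both of their Riemann--Roch spaces to be trivial.

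\emph{Dimension count.} Since $2g-2<\deg(G)<N$ and $2g-2<\deg(H)<N$, Proposition~\ref{parameters} gives
$$\dim \CL(D,G)=\deg(G)+1-g \quad\text{and}\quad \dim\CL(D,H)=\deg(H)+1-g.$$
By i), their sum is $N+2g-2+2-2g=N$, which is the first condition in \eqref{LCPcond}.

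\emph{Trivial intersection.} Let $c\in \CL(D,G)\cap\CL(D,H)$. Write $c=(f(P_1),\dots,f(P_N))=(h(P_1),\dots,h(P_N))$ with $f\in\LL(G)$ and $h\in\LL(H)$.
\begin{enumerate}[(a)]
\item Since $G\le \lmd(G,H)$ and $H\le\lmd(G,H)$, we have $f-h\in\LL(\lmd(G,H))$.
\item The support of $\lmd(G,H)$ is disjoint from $\Supp(D)$, because both $G$ and $H$ avoid $\Supp(D)$. Moreover $f-h$ vanishes at every $P_i$. Hence $f-h\in \LL(\lmd(G,H)-D)$.
\item By \eqref{basicP}, together with i) and ii),
$$\deg\lmd(G,H)=\deg G+\deg H-\deg\gcd(G,H)=N+2g-2-(g-1)=N+g-1.$$
So $\lmd(G,H)-D$ has degree $g-1$.
\item This divisor is non-special by iii), so $\ell(\lmd(G,H)-D)=0$. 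Therefore $f=h$.
\item Then $f\in\LL(G)\cap\LL(H)=\LL(\gcd(G,H))$ by \eqref{basicP}. This divisor is non-special of degree $g-1$ by ii) and iii), so $\ell(\gcd(G,H))=0$. Hence $f=0$ and $c=0$.
\end{enumerate}

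There is no serious obstacle in this argument. The one point needing care is step (b): passing from the vanishing of $f-h$ at the $P_i$ to membership in $\LL(\lmd(G,H)-D)$. This uses that the $P_i$ are rational places outside the support of $\lmd(G,H)$, so that $\nu_{P_i}(f-h)\ge 1$ exactly compensates the coefficient $-1$ of $P_i$ in $-D$. Beyond that, the proof is the degree bookkeeping in step (c).
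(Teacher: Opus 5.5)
Your proof is correct and complete. The dimension count follows from Proposition~\ref{parameters} together with i). The intersection argument is also sound: it first shows $f-h\in\LL(\lmd(G,H)-D)=\{0\}$ and then $f\in\LL(\gcd(G,H))=\{0\}$.

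The paper gives no proof of this statement. It cites \cite{BDM2024} and notes that the hypotheses were rephrased via Riemann--Roch, using that a non-special divisor of degree $g-1$ is one with $\ell=0$. Your step (c) derives $\deg(\lmd(G,H)-D)=g-1$ from i), ii) and \eqref{basicP}, which is what lets that rephrasing apply to $\lmd(G,H)-D$; the paper leaves this step implicit.
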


\section{An arithmetic characterization of non-special divisors}

In this section, considering a Kummer extension $\fq(\cX)/\fq(x)$ as in (\ref{equationX}), we provide an arithmetic criterion to determine all non-special divisors of degree $g(\cX)$ and $g(\cX)-1$ whose support is contained in a set of totally ramified places $\{Q_1, Q_2, \dots, Q_n\}$.

We begin by providing a formula for the dimension of the Riemann-Roch space associated to any divisor with support contained in $\{Q_1, Q_2, \dots, Q_n\}$.

\begin{theorem}\label{teo_dimension_RRS_Kummer}
	Let $\negalpha=(\al_1, \al_2, \dots, \al_n)\in \Z^n$ and assume that $2\leq n\leq q$. Consider a Kummer extension as in (\ref{equationX}), $\{Q_1, \dots, Q_n\}$ a set of totally ramified places, and the divisor $D_{\negalpha}(\Q) =\sum_{i=1}^n \alpha_i Q_i$. Then the dimension of the Riemann-Roch space $\cL(D_{\negalpha}(\Q))$ is given by 
	$$
	\ell(D_{\negalpha}(\Q))=\max\left\{0, 1+\sum_{k=1}^{n}\floor*{\frac{\al_k}{m}}\right\}+\sum_{i=1}^{m-1}\max\left\{0, n-\be(i)+\sum_{k=1}^{n}\floor*{\frac{\al_k-t_k(i)}{m}}\right\},
	$$
	 where $ t_k(i)= (i\la_k)\bmod m$ and $\be(i)=\sum_{k=1}^{r}\ceil*{\frac{i\la_k}{m}}-1,$ as in (\ref{funcoestsbi}).
\end{theorem}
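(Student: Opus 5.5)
We combine Theorem \ref{teo_Riemann_Roch_dimension} with the explicit description of $\widehat{\Gamma}(\Q)$ in Theorem \ref{teo_widehat_Gamma}. Fix $i=1$ in Theorem \ref{teo_Riemann_Roch_dimension}, so that $\ell(D_{\negalpha}(\Q))$ equals the number of distinct values of the first coordinate among the elements of $\widehat{\Gamma}_{\Q}(\negalpha)$. The set $\widehat{\Gamma}(\Q)$ is a disjoint union of $m$ families, indexed by the residue class of the coordinates modulo $m$. The family $i=0$ consists of the tuples $(mj_1,\dots,mj_n)$ with $\sum j_k=0$. For $1\le i\le m-1$ the family consists of the tuples $(mj_1+t_1(i),\dots,mj_n+t_n(i))$ with $\sum j_k=\be(i)+1-n$.

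First we check that distinct families never share a first coordinate. Assume $\la_1$ is the multiplicity at the totally ramified place $P_1$; total ramification gives $(\la_1,m)=1$. Hence $i\mapsto t_1(i)=(i\la_1)\bmod m$ is a bijection from $\{1,\dots,m-1\}$ onto $\{1,\dots,m-1\}$, and $t_1(i)\neq 0$. So the first coordinate modulo $m$ determines the family. It follows that $|\widehat{\Gamma}_{\Q}(\negalpha)/\equiv_1|$ is the sum over the $m$ families of the number of distinct values of $j_1$ that occur among elements $\le\negalpha$.

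Next we count within one family. For $1\le i\le m-1$, the condition $mj_k+t_k(i)\le \al_k$ is equivalent to $j_k\le c_k:=\floor*{(\al_k-t_k(i))/m}$. The admissible tuples are therefore the integer vectors with $j_k\le c_k$ for every $k$ and $\sum_k j_k=\be(i)+1-n$. We must count the values of $j_1$ that extend to such a vector. Since the $j_k$ for $k\ge2$ are unbounded below, a value $j_1\le c_1$ extends exactly when $\be(i)+1-n-j_1\le \sum_{k\ge2}c_k$. The admissible values are thus the integers $j_1$ with $\be(i)+1-n-\sum_{k\ge 2}c_k\le j_1\le c_1$. Their number is $\max\{0,\,n-\be(i)+\sum_{k=1}^n c_k\}$, which is the $i$-th summand in the statement. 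This step uses $n\ge2$, so that there is at least one free coordinate besides $j_1$. For the family $i=0$ the same argument with $c_k=\floor*{\al_k/m}$ and total sum $0$ gives $\max\{0,1+\sum_k\floor*{\al_k/m}\}$. Adding the $m$ counts yields the formula.

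The main obstacle is the injectivity step that separates the families by their first coordinate. It relies on $t_1$ being a bijection on the nonzero residues, which requires $(\la_1,m)=1$. This coprimality is the standard criterion for total ramification in Kummer extensions, and I would invoke it explicitly, including the case where $Q_1$ lies over the pole, where $\la_1=-\deg f$. The remaining steps are elementary lattice-point counting.
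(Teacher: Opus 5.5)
Your proposal is correct and follows essentially the same route as the paper. You apply Theorem \ref{teo_Riemann_Roch_dimension} with $i=1$, split $\widehat{\Gamma}(\Q)$ into the $m$ families given by Theorem \ref{teo_widehat_Gamma}, and count the admissible values of $j_1$ in each family using the same interval computation. Your explicit check that different families have disjoint sets of first coordinates is a step the paper uses tacitly when it sums the per-family counts. That check rests on $(\la_1,m)=1$, so that $t_1$ permutes the nonzero residues modulo $m$. Including it makes the argument complete rather than changing the approach.
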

\begin{proof}
	As an application of  Theorem \ref{teo_Riemann_Roch_dimension}, choosing $i=1$ we have $$\ell(D_{\negalpha}(\Q))=|\widehat{\Gamma}_{\Q}(\negalpha)/\equiv_1|.$$ Therefore, it is sufficient to compute the following cardinality: 
	$$|\{\be_1:\negbeta=(\be_1, \dots, \be_n) \in\widehat{\Gamma}_{\Q}(\negalpha)\}|.$$
	Using the notation as in (\ref{funcoestsbi}), Theorem \ref{teo_widehat_Gamma} provides a decomposition of  $\widehat{\Gamma}(\Q)$ as a disjoint union  $\widehat{\Gamma}(\Q)=\cup_{i=0}^{m-1}\widehat{\Gamma}_i$, where 
	\begin{align*}
	\bullet \, \widehat{\Gamma}_0&=\{(mj_1, \dots, mj_n)\,: \, j_1, \dots, j_n\in \Z \text{ and } j_1+\cdots+j_n=0\}, \text{ and} \\
	\bullet \, \widehat{\Gamma}_i&=\{(mj_1+t_1(i), \dots, mj_n+t_n(i))\,: \,  j_1, \dots, j_n\in \Z \text{ and }  j_1+\cdots+j_n=\be(i)+1-n\}, 
	\end{align*}
	 for $ i=1, \dots,  m-1.$ 
	  
	 Thus, 
	$$|\{\be_1:\negbeta=(\be_1, \dots, \be_n) \in\widehat{\Gamma}_{\Q}(\negalpha)\}|=\sum_{i=0}^{m-1}|\{\be_1:\negbeta=(\be_1, \dots, \be_n) \in\widehat{\Gamma}_{\Q}(\negalpha)\cap \widehat{\Gamma}_i\}|.$$
We now consider two cases.

\vspace{0.2cm}

	$\bullet$ If $\negbeta\in \widehat{\Gamma}_{\Q}(\negalpha)\cap \widehat{\Gamma}_0$, then $\negbeta=(mj_1, mj_2, \dots, mj_n)$, where $j_1+\cdots+j_n=0$, and $mj_k\leq \al_k$ for every $1\leq k\leq n$. Thus,
	$$
	-\sum_{k=2}^{n}\floor*{\frac{\al_k}{m}}\leq j_1\leq \floor*{\frac{\al_1}{m}},
	$$
	and we obtain
	$$
	|\{\be_1:\negbeta=(\be_1, \dots, \be_n) \in\widehat{\Gamma}_{\Q}(\negalpha)\cap \widehat{\Gamma}_0\}|=\max\left\{0, 1+\sum_{k=1}^{n}\floor*{\frac{\al_k}{m}}\right\}.
	$$
	
	$\bullet$ If $\negbeta\in \widehat{\Gamma}_{\Q}(\negalpha)\cap \widehat{\Gamma}_i$ for some $1\leq i \leq m-1$, then $\negbeta=(mj_1+t_1(i),\dots, mj_n+t_n(i))$, where $j_1+\cdots+j_n=\be(i)+1-n$, and $mj_k+t_k(i)\leq \al_k$ for every $1\leq k\leq n$. Thus, we have
	$$
	\be(i)+1-n-\sum_{k=2}^{n}\floor*{\frac{\al_k-t_k(i)}{m}}\leq j_1\leq \floor*{\frac{\al_1-t_1(i)}{m}}.
	$$
	So,
	$$
	|\{\be_1:\negbeta=(\be_1, \dots, \be_n) \in\widehat{\Gamma}_{\Q}(\negalpha)\cap \widehat{\Gamma}_i\}|=\max\left\{0, n-\be(i)+\sum_{k=1}^{n}\floor*{\frac{\al_k-t_k(i)}{m}}\right\},
	$$
	and the result follows.
\end{proof}

In the following theorem, we characterize all non-special divisors of degree $g(\cX)-1$ with support contained in totally ramified places $\{Q_1, Q_2, \dots, Q_n\}$ in the Kummer extension $\fq(\cX)/\fq(x)$.  This will be used in the next section to present all explicit non-special divisors of degree $g(\cX)-1$ in some cases.

\begin{theorem}\label{teo-car-div-deg-g-1}
	Following the same notation as in Theorem \ref{teo_dimension_RRS_Kummer}.	
	\begin{enumerate}[i)]	
	\item  Let $\negalpha^\prime=(\al_1^\prime,  \dots, \al_n^\prime) \in \Z^n$ and assume that $2\leq n \leq q$. 
 Then  $D_{\negalpha^\prime}(\Q)$ is a non-special divisor of degree $g(\cX)-1$ if and only if 
\begin{equation}\label{primer_criterio_g-1}
\sum_{k=1}^{n}\floor*{\frac{\al_k^\prime}{m}}=-1 \quad \text{and}\quad n+\sum_{k=1}^{n}\floor*{\frac{\al_k^\prime-t_k(i)}{m}}=\be(i)\text{ for every }1\leq i \leq m-1.
\end{equation}

\item Dividing $\alpha_i^\prime $ by $m$, we can write	$$\label{divD}
	D_{\negalpha^\prime}(\Q)=\sum_{k=1}^{n}(mj_k+\al_k)Q_k, 
	\text{ where }  j_k \in \Z \text { and } 0\leq \al_k\leq m-1 \text{  for  }  k= 1, \dots  n.
	$$
	Then $D_{\negalpha^\prime}(\Q)$ is a non-special divisor of degree $g(\cX)-1$ if and only 
	if 
	\begin{equation}\label{eq_criterio_g-1}
		\sum_{k=1}^{n}j_k=-1 \quad\text{and}\quad \be(i)=n-1+\sum_{k=1}^{n}\floor*{\frac{\al_k-t_k(i)}{m}} \text{ for every } 1\leq i \leq m-1.
	\end{equation}
\end{enumerate}
\end{theorem}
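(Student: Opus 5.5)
The plan is to apply Theorem \ref{teo_dimension_RRS_Kummer} to $\negalpha^\prime$ and compare with the non-special condition. Since $\deg D_{\negalpha^\prime}(\Q)=\sum_k \al_k^\prime$ (all $Q_k$ are rational), a divisor of degree $g(\cX)-1$ is non-special exactly when $\ell(D_{\negalpha^\prime}(\Q))=0$. By Theorem \ref{teo_dimension_RRS_Kummer}, $\ell$ is a sum of $m$ terms of the form $\max\{0,\cdot\}$, so $\ell=0$ if and only if every inner expression is $\le 0$:
\begin{equation*}
1+\sum_{k=1}^n\floor*{\frac{\al_k^\prime}{m}}\le 0,\qquad n-\be(i)+\sum_{k=1}^n\floor*{\frac{\al_k^\prime-t_k(i)}{m}}\le 0 \quad (1\le i\le m-1).
\end{equation*}

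The key step is upgrading these inequalities to equalities using the degree condition together with Corollary \ref{coro1}. Write $\al_k^\prime=mj_k+\al_k$ with $0\le\al_k\le m-1$. Then $\floor*{\al_k^\prime/m}=j_k$, and $\floor*{(\al_k^\prime-t_k(i))/m}=j_k+\floor*{(\al_k-t_k(i))/m}$, where the last floor is $0$ or $-1$ because $0\le t_k(i)\le m-1$. Hence $\sum_k\floor*{(\al_k-t_k(i))/m}=-\#\{k:\al_k<t_k(i)\}$. The central computation is an exact formula for the degree in terms of these counts. For fixed $k$, the number of $i\in\{0,\dots,m-1\}$ with $i=0$ or $t_k(i)\le\al_k$ should be governed by $\al_k$, but $t_k(i)=(i\la_k)\bmod m$ is not a bijection in general. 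The cleaner route is therefore not to compute this count directly but to use a summation identity: set $S:=\sum_k j_k$ and $c_i:=\sum_k\floor*{(\al_k-t_k(i))/m}$. Summing the inequalities gives
\begin{equation*}
(1+S)+\sum_{i=1}^{m-1}\big(n-\be(i)+S+c_i\big)\le 0.
\end{equation*}
By Corollary \ref{coro1} this reads $mS+m+(m-1)n-g+\sum_i c_i\le 0$. I then need the identity $\deg D_{\negalpha^\prime}(\Q)=mS+\sum_k\al_k = mS+(m-1)n+\sum_{i}c_i+(\text{correction})$. Rather than derive this by counting, I would apply Theorem \ref{teo_dimension_RRS_Kummer} to a divisor of very large degree (at least $2g-1$) where all maxima are attained by the inner expressions. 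Riemann–Roch then gives an exact linear identity
\begin{equation*}
\deg D+1-g = m+mS+(m-1)n-g+\sum_i c_i,
\end{equation*}
valid whenever all terms are positive. Both sides change compatibly when one $\al_k^\prime$ is increased by $1$, since the right side is a sum of floors and each $\al_k^\prime\mapsto\al_k^\prime+1$ increases exactly one of the $m$ floors $\floor*{(\al_k^\prime-t)/m}$ for $t\in\{0,t_k(1),\dots\}$ provided these residues are distinct. This distinctness is the main obstacle: it holds when $(\la_k,m)=1$, which is exactly the total ramification of $P_k$. I would therefore first establish the total ramification criterion $(\la_k,m)=1$, after which $\{0,t_k(1),\dots,t_k(m-1)\}$ is a full residue system. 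The identity then holds for every $\negalpha^\prime$, as formal expressions without the maxima.

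With that identity in hand, when $\deg D=g-1$ the sum of the $m$ inner expressions equals $\deg D+1-g=0$. A sum of $m$ nonpositive integers equal to $0$ forces each to vanish. This gives exactly (\ref{primer_criterio_g-1}). Conversely, (\ref{primer_criterio_g-1}) makes every maximum $0$, so $\ell=0$, and the identity yields $\deg D=g-1$, so $D$ is non-special of degree $g-1$. Part ii) follows by substituting $\al_k^\prime=mj_k+\al_k$: the first condition becomes $\sum j_k=-1$, and the second becomes $n+\sum_k j_k+\sum_k\floor*{(\al_k-t_k(i))/m}=\be(i)$, i.e.\ $\be(i)=n-1+\sum_k\floor*{(\al_k-t_k(i))/m}$.
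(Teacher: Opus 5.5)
Your argument is correct and has the same skeleton as the paper's. Non-speciality in degree $g(\cX)-1$ means $\ell(D_{\negalpha^\prime}(\Q))=0$, and Theorem \ref{teo_dimension_RRS_Kummer} turns this into ``all $m$ inner expressions are $\le 0$''. These $m$ inner expressions sum to exactly $\deg D_{\negalpha^\prime}(\Q)+1-g(\cX)$, so degree $g(\cX)-1$ forces each of them to vanish, and conversely.

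The difference is in how you prove that summation identity. The paper computes it directly:
\begin{enumerate}[i)]
\item it replaces $\sum_i\be(i)$ by $g(\cX)$ via Corollary \ref{coro1};
\item it uses that $i\mapsto t_k(i)$ permutes $\{1,\dots,m-1\}$, since $(\la_k,m)=1$ at a totally ramified place, to replace $t_k(i)$ by $i$;
\item it applies $\sum_{i=1}^{m-1}\floor*{(\al-i)/m}=\al-\floor*{\al/m}-m+1$.
\end{enumerate}
You instead anchor the identity at a divisor with all coordinates large, where Riemann--Roch and Theorem \ref{teo_dimension_RRS_Kummer} both apply with every maximum attained. You then propagate it to all of $\Z^n$ by unit steps, using that $\{0,t_k(1),\dots,t_k(m-1)\}$ is a complete residue system.

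This is valid, and it even makes Corollary \ref{coro1} superfluous: evaluating your identity at $\negalpha^\prime=(0,\dots,0)$ re-derives $\sum_i\be(i)=g(\cX)$. But it is a detour. The coprimality $(\la_k,m)=1$ that your induction needs already gives the direct count $\#\{1\le i\le m-1:\ t_k(i)>\al_k\}=m-1-\al_k$. Hence $\sum_i c_i=\sum_k\al_k-n(m-1)$, so the ``correction'' term you left open is simply $0$.

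There is one arithmetic slip to fix. The expression $(1+S)+\sum_{i=1}^{m-1}\big(n-\be(i)+S+c_i\big)$ equals $1+mS+(m-1)n-g+\sum_i c_i$, not $m+mS+(m-1)n-g+\sum_i c_i$. As displayed, your identity would be off by $m-1$. Your induction fixes the constant at the base point, so the logic is unaffected, but both displays should be corrected.

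Your explicit handling of the converse is more careful than the paper's: the equalities give $\ell=0$ and, through the identity, $\deg=g(\cX)-1$. The paper leaves that direction implicit in its chain of (in)equalities.
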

\begin{proof}
	The divisor $D_{\negalpha^\prime}(\Q)$ is a non-special divisor of degree $g(\cX)-1$ if and only if $\ell(D_{\negalpha^\prime}(\Q))=0$ and $\sum_{k=1}^{n}\al_k^\prime =g(\cX)-1$. From Theorem \ref{teo_dimension_RRS_Kummer}, $\ell(D_{\negalpha^\prime}(\Q))=0$ if and only if 
	$$
	\sum_{k=1}^{n}\floor*{\frac{\al_k^\prime}{m}}+1\leq 0 \quad \text{and}\quad n+\sum_{k=1}^{n}\floor*{\frac{\al_k^\prime-t_k(i)}{m}}\leq\be(i)\text{ for every }1\leq i \leq m-1.
	$$
	Thus, from Corollary \ref{coro1}, we obtain 
	\begin{align*}
		g(\cX)&=\sum_{i=1}^{m-1}\be(i)
		\geq\sum_{i=1}^{m-1}\left(n+\sum_{k=1}^{n}\floor*{\frac{\al_k^\prime-t_k(i)}{m}}\right)\\
		&=n(m-1)+\sum_{k=1}^{n}\sum_{i=1}^{m-1}\floor*{\frac{\al_k^\prime-t_k(i)}{m}}
		=n(m-1)+\sum_{k=1}^{n}\sum_{i=1}^{m-1}\floor*{\frac{\al_k^\prime-i}{m}}\\
		&=n(m-1)+\sum_{k=1}^{n}\left(\al_k^\prime-\floor*{\frac{\al_k^\prime}{m}}-m+1\right)
		=g(\cX)-1-\sum_{k=1}^{n}\floor*{\frac{\al_k^\prime}{m}}\\
		&\geq g(\cX).
	\end{align*}
	The result of the first item of the theorem follows. 
	
	Now we prove item $ii)$. If $D_{\negalpha^\prime}(\Q)$ as in item $ii)$ satisfies condition (\ref{eq_criterio_g-1}), then it is clear it satisfies the conditions in $i)$, and we conclude it is non-special. Conversely, given  $$D_{\negalpha^\prime}(\Q)=\sum_{k=1}^{n}(mj_k+\al_k)Q_k \text{  where } j_k \in \Z \text{ and } 0\leq \al_k\leq m-1 \text{ for } k= 1, \dots,  n$$ a non-special divisor of degree $g(\cX)-1$,  then  $D_{\negalpha^\prime}(\Q)$ satisfies the conditions given in item $i)$. That is,    
	$$-1=\sum_{k=1}^{n}\floor*{\frac{mj_k+\al_k}{m}}=\sum_{k=1}^{n}\left(j_k+\floor*{\frac{\al_k}{m}}\right)=\sum_{k=1}^{n}j_k,$$
	and for every $1\leq i \leq m-1$,
	\begin{align*}
		\be(i)&=n+\sum_{k=1}^{n}\floor*{\frac{mj_k+\al_k-t_k(i)}{m}}=n+\sum_{k=1}^{n}\left(j_k+\floor*{\frac{\al_k-t_k(i)}{m}}\right)\\&=n-1+\sum_{k=1}^{n}\floor*{\frac{\al_k-t_k(i)}{m}}.	
	\end{align*}
	
%

\end{proof}

\begin{remark}\label{remark_equiv}
Using the same   notation as in Theorem \ref{teo-car-div-deg-g-1},  note that if $Q$ is any totally ramified place in $\fq(\cX)/\fq(x)$ then $mQ_i\sim mQ$ for every $1\leq i \leq n$. Thus, if $j_1, j_2, \dots, j_n\in \Z$ are such that $\sum_{k=1}^{n}j_k=-1$, we obtain
$$
\sum_{k=1}^{n}mj_kQ_k\sim -mQ
$$
for any totally ramified place $Q$ in $\fq(\cX)/\fq(x)$. So, from Theorem \ref{teo-car-div-deg-g-1}, we conclude that $D\in \Div(\cX)$ with $\supp(D)\subseteq \{Q_1, Q_2, \dots, Q_n\}$ is a non-special of degree $g(\cX)-1$ if and only if 
$$
D\sim -mQ+\sum_{k=1}^{n}\al_kQ_k,
$$
where $0\leq \al_k\leq m-1$ for $k=1, \dots, n$ and the function $\be(i)$ as in (\ref{funcoestsbi}) satisfies
$$
\be(i)=n-1+\sum_{k=1}^{n}\floor*{\frac{\al_k-t_k(i)}{m}} \text{ for every } 1\leq i \leq m-1.
$$
\end{remark}

Next corollary shows, in particular, that not for all Kummer extensions we have non-special divisors of small degree with support only in the totally ramified places.

\begin{corollary}
Following the same notation as in Theorem \ref{teo-car-div-deg-g-1}, suppose that the multiplicity of all roots of $f(x)$ are smaller than $m$. If there exists a non-special divisor of degree $g(\cX)-1$ with support contained in $\{Q_1, Q_2, \dots, Q_n\}$, then
\begin{equation}\label{condition_g-1}
	\max\{\be(i): 1\leq i \leq m-1\} \leq n-1.
\end{equation}
In particular, $ \floor{\frac{\deg f}{m}} \geq r-n-1$.
\end{corollary}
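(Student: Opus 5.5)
The plan is to apply item ii) of Theorem \ref{teo-car-div-deg-g-1} directly and bound each floor term. Suppose $D$ is a non-special divisor of degree $g(\cX)-1$ with $\supp(D)\subseteq\{Q_1,\dots,Q_n\}$. I write $D=D_{\negalpha'}(\Q)=\sum_{k=1}^n(mj_k+\al_k)Q_k$ with $0\le \al_k\le m-1$. Then (\ref{eq_criterio_g-1}) gives, for every $1\le i\le m-1$,
$$\be(i)=n-1+\sum_{k=1}^{n}\floor*{\frac{\al_k-t_k(i)}{m}}.$$
Since $t_k(i)=(i\la_k)\bmod m$ also lies in $[0,m-1]$, we have $-(m-1)\le \al_k-t_k(i)\le m-1$. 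Hence each floor is $-1$ or $0$, and in particular is $\le 0$. This gives $\be(i)\le n-1$ for all $i$, which is (\ref{condition_g-1}). This part is immediate and needs no hypothesis on the multiplicities.

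For the ``in particular'' part I would evaluate $\be(1)$ explicitly using the hypothesis on multiplicities. Among $P_1,\dots,P_r$, the zeros of $f$ have multiplicities $1\le\la_k\le m-1$, so $\ceil*{\la_k/m}=1$ for each of them. The pole of $f$ is the infinite place, with $\la=\nu_{\infty}(f)=-\deg f$, and it contributes $\ceil*{-\deg f/m}=-\floor*{\deg f/m}$. Here I use that $r$ counts the $r-1$ distinct roots together with the pole; the pole is a genuine pole since $\deg f\ge 2$. Thus
$$\be(1)=(r-1)-\floor*{\frac{\deg f}{m}}-1=r-2-\floor*{\frac{\deg f}{m}}.$$
Combining this with $\be(1)\le n-1$ yields $\floor*{\deg f/m}\ge r-n-1$.

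The only point needing care is the bookkeeping of which of the $P_k$ is the pole and confirming that its valuation is $-\deg f$. Everything else follows directly from Theorem \ref{teo-car-div-deg-g-1}.
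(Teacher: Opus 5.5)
Your proposal is correct and follows the paper's own argument: you apply item ii) of Theorem \ref{teo-car-div-deg-g-1}, note that each $\floor*{(\al_k-t_k(i))/m}\leq 0$ to get $\be(i)\leq n-1$, and then evaluate $\be(1)$. Your explicit computation $\be(1)=r-2-\floor*{\deg f/m}$, with the infinite place as the pole of valuation $-\deg f$, is exactly the step the paper leaves implicit in ``computing $\be(1)$''.
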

\begin{proof}
If there exists a non-special divisor of degree $g(\cX)-1$ with support contained in $\{Q_1, Q_2, \dots, Q_n\}$, then there are $\al_1, \al_2, \dots, \al_n$, where $0\leq \al_k\leq m-1$ for every $1\leq k \leq n$, satisfying the second equality of condition (\ref{eq_criterio_g-1}). Since $
\floor*{\frac{\al_k-t_k(i)}{m}}\leq 0,$ we obtain $\beta(i)\leq n-1$ for $1\leq i \leq m-1$. Thus, we deduce  
that 
\begin{equation*}
	\max\{\be(i): 1\leq i \leq m-1\} \leq n-1.
\end{equation*}
In particular, computing $\be(1)$, we obtain $ \floor{\frac{\deg f}{m}} \geq r-n-1$.
\end{proof}

\begin{example} 
Consider the curve introduced by Giulietti and Korchm\'{a}ros \cite{GK2009} given by the affine equation
\begin{equation}\label{GK}
	\mathcal{GK}:\, y^{q^3+1}=(x^q+x)h(x)^{q+1}, \quad \text{where}\quad h(x)=\sum_{i=0}^{q}(-1)^{i+1}x^{i(q-1)}.
\end{equation}
This curve is maximal over $\fqss$ and is the first example of a maximal curve not covered by the Hermitian curve over $\fqss$. 
For this curve we have $r=q^2+1$, $n=q+1$, $\deg f=q^3$, and $m=q^3+1$. Hence,  $$ \floor*{\frac{\deg f}{m}}=\floor*{\frac{q^3}{q^3+1}}=0 < q^2-q-1= r-n-1.$$
We conclude that, considering the affine equation in (\ref{GK}) of the $\mathcal{GK}$ curve, there are no non-special divisors of degree $g(\mathcal{GK})-1$ with support in the set of totally ramified places of $\fqss(\mathcal{GK})/\fqss(x)$.  
\end{example}

Analogously to Theorem \ref{teo-car-div-deg-g-1}, in the following results we characterize all non-special divisors of degree $g(\cX)$ with support contained in $\{Q_1, Q_2, \dots, Q_n\}$.

\begin{theorem}\label{teo-car-div-deg-g}
	Following the same notation as in Theorem \ref{teo_dimension_RRS_Kummer}, let $\negalpha=(\al_1, \al_2, \dots, \al_n)\in \Z^n$ and assume that $2\leq n \leq q$. Then  $D_{\negalpha}(\Q)$ is a non-special divisor of degree $g(\cX)$ if and only if one of the following
	conditions is satisfied:
	\begin{enumerate}[i)]
	\item \label{cond.i-teo-car-div}  $
	\displaystyle\sum_{k=1}^{n}\floor*{\frac{\al_k}{m}}=0$ and $n+\displaystyle\sum_{k=1}^{n}\floor*{\frac{\al_k-t_k(i)}{m}}=\be(i)$ for every $1\leq i \leq m-1$, or
	\item \label{cond.ii-teo-car-div} $
	\displaystyle\sum_{k=1}^{n}\floor*{\frac{\al_k}{m}}=-1$, $n-1+\displaystyle\sum_{k=1}^{n}\floor*{\frac{\al_k-t_k(j)}{m}}=\be(j)$ for some $1\leq j \leq m-1$, and $ n+\displaystyle\sum_{k=1}^{n}\floor*{\frac{\al_k-t_k(i)}{m}}=\be(i)$ for every $1\leq i \leq m-1$ with $i\neq j$.
\end{enumerate}
\end{theorem}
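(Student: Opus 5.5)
The plan mirrors the proof of Theorem \ref{teo-car-div-deg-g-1}. The divisor $D_{\negalpha}(\Q)$ is non-special of degree $g(\cX)$ exactly when $\sum_k \al_k = g(\cX)$ and $\ell(D_{\negalpha}(\Q))=1$. I will write $\ell(D_{\negalpha}(\Q)) = A_0 + \sum_{i=1}^{m-1} A_i$ using Theorem \ref{teo_dimension_RRS_Kummer}, where
$$A_0=\max\left\{0,1+S_0\right\},\quad S_0=\sum_{k=1}^n\floor*{\frac{\al_k}{m}},\qquad A_i=\max\left\{0, c_i\right\},\quad c_i=n-\be(i)+\sum_{k=1}^n\floor*{\frac{\al_k-t_k(i)}{m}}.$$

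The key identity is the one computed in the proof of Theorem \ref{teo-car-div-deg-g-1}. Since $i\mapsto t_k(i)$ permutes $\{1,\dots,m-1\}$, which requires $(\la_k,m)=1$ for totally ramified places, and by Corollary \ref{coro1}, we get
$$\sum_{i=1}^{m-1} c_i = n(m-1)+\sum_k\left(\al_k-\floor*{\frac{\al_k}{m}}-m+1\right)-g(\cX) = \deg D_{\negalpha}(\Q) - S_0 - g(\cX).$$
With $\deg = g(\cX)$ this becomes $\sum_i c_i = -S_0$. Conversely, the degree is recovered from the $c_i$ and $S_0$ through the same identity.

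For necessity, suppose $\ell = 1$ and $\deg = g$. Either $A_0=1$ and all $A_i=0$, or $A_0=0$ and exactly one $A_j=1$ with the other $A_i=0$.

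In the first case $S_0=0$ and every $c_i\le 0$. Since $\sum c_i=-S_0=0$, all $c_i=0$, which is condition i).

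In the second case $S_0\le -1$, $c_j=1$, and $c_i\le 0$ for $i\neq j$. Then $-S_0=\sum c_i\le 1$, so $S_0=-1$ and $\sum_{i\ne j}c_i=0$, forcing $c_i=0$ for all $i\ne j$. This is condition ii), where $c_j=1$ rewrites as $n-1+\sum_k\floor*{\frac{\al_k-t_k(j)}{m}}=\be(j)$.

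For sufficiency, conditions i) and ii) directly give $A_0+\sum A_i=1$ by the dimension formula. The identity then gives $\deg D_{\negalpha}(\Q)=g(\cX)+S_0+\sum c_i$, which equals $g+0+0$ in case i) and $g-1+1$ in case ii). In both cases the degree is $g(\cX)$, so $D_{\negalpha}(\Q)$ is non-special.

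The main point requiring care is the rearrangement $\sum_i\floor*{\frac{\al_k-t_k(i)}{m}}=\sum_i\floor*{\frac{\al_k-i}{m}}$. This holds because $t_k$ is a bijection on $\{1,\dots,m-1\}$ for totally ramified $Q_k$, exactly as used before. Otherwise the argument is a bookkeeping case split.
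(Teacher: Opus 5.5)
Your proposal is correct and follows essentially the same argument as the paper. The paper uses the dimension formula of Theorem \ref{teo_dimension_RRS_Kummer} to split $\ell(D_{\negalpha}(\Q))=1$ into the same two cases, then forces every inequality to be an equality via the chain $g(\cX)\geq\cdots\geq g(\cX)$, which is your identity $\sum_{i=1}^{m-1} c_i=\deg D_{\negalpha}(\Q)-S_0-g(\cX)$ (from the permutation $i\mapsto t_k(i)$ and Corollary \ref{coro1}) written as inequalities. Your version also checks explicitly, in the sufficiency direction, that conditions i) and ii) give $\deg D_{\negalpha}(\Q)=g(\cX)$, which the paper leaves implicit.
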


\begin{proof}
The  divisor $D_{\negalpha}(\Q)$ is a non-special divisor of degree $g(\cX)$ if and only if 
$$\ell(D_{\negalpha}(\Q))=1 \quad \text{and} \quad \sum_{k=1}^{n}\al_k=g(\cX).$$ We recall that from Corollary \ref{coro1} we also have $g(\cX)=\sum_{i=1}^{m-1}\be(i)$.

Now, one can deduce from Theorem \ref{teo_dimension_RRS_Kummer}, that $\ell(D_{\negalpha}(\Q))=1$ if and only if  one of the following cases happen:
\begin{align*}
\text{{\bf Case 1:}} \quad &\sum_{k=1}^{n}\floor*{\frac{\al_k}{m}}=0 \text{ and }
 n+\sum_{k=1}^{n}\floor*{\frac{\al_k-t_k(i)}{m}}\leq \be(i)\text{ for every } 1\leq i\leq m-1, \text{ or}\\
\text{{\bf Case 2:}}\quad  & \sum_{k=1}^{n}\floor*{\frac{\al_k}{m}}\leq -1, \,\, n+\sum_{k=1}^{n}\floor*{\frac{\al_k-t_k(j)}{m}}-1=\be(j)\text{ for some }1\leq j \leq m-1,\\
&\quad \text{and }\,\, n+\sum_{k=1}^{n}\floor*{\frac{\al_k-t_k(i)}{m}}\leq \be(i)\text{ for every }1\leq i \leq m-1 \text{ with } i\neq j.
\end{align*}
In the first case, since $\sum_{i=1}^{m-1}\floor*{\frac{\al_k-i}{m}}=\al_k-\floor*{\frac{\al_k}{m}}-m+1$ for each $1\leq k \leq n$, we have
\begin{align*}
g(\cX)&=\sum_{i=1}^{m-1}\be(i)
\geq \sum_{i=1}^{m-1}\left(n+\sum_{k=1}^{n}\floor*{\frac{\al_k-t_k(i)}{m}}\right)\\
&=n(m-1)+\sum_{k=1}^{n}\sum_{i=1}^{m-1}\floor*{\frac{\al_k-t_k(i)}{m}}
=n(m-1)+\sum_{k=1}^{n}\sum_{i=1}^{m-1}\floor*{\frac{\al_k-i}{m}}\\
&=n(m-1)+\sum_{k=1}^{n}\left(\al_k- \floor*{\frac{\al_k}{m}}-m+1\right)=\sum_{k=1}^{n}\al_k\\
&=g(\cX),
\end{align*}
and item $i)$ follows. 
In the second case,  we have   
\begin{align*}
g(\cX)&=\sum_{i=1}^{m-1}\be(i)
\geq \sum_{i=1}^{m-1}\left(n+\sum_{k=1}^{n}\floor*{\frac{\al_k-t_k(i)}{m}}\right)-1\\
&=n(m-1)+\sum_{k=1}^{n}\sum_{i=1}^{m-1}\floor*{\frac{\al_k-t_k(i)}{m}}-1
=n(m-1)+\sum_{k=1}^{n}\sum_{i=1}^{m-1}\floor*{\frac{\al_k-i}{m}}-1\\
&=n(m-1)+\sum_{k=1}^{n}\left(\al_k- \floor*{\frac{\al_k}{m}}-m+1\right)-1
=\sum_{k=1}^{n}\al_k-\sum_{k=1}^{n}\floor*{\frac{\al_k}{m}}-1\\
&\geq \sum_{k=1}^{n}\al_k
=g(\cX),
\end{align*}
and item $ii)$ follows.
\end{proof}

\begin{theorem}\label{teo2-car-div-deg-g}
Following the same notation as in Theorem \ref{teo_dimension_RRS_Kummer}, assume that $2\leq n \leq q$. Let $D\in  \Div(\cX)$ be a divisor with $\supp(D)\subseteq\{Q_1, Q_2, \dots, Q_n\}$ and write $D$ in the form
$$D=\sum_{k=1}^{n}(mj_k+\al_k)Q_k, 
\text{ where }  j_k \in \Z \text { and } 0\leq \al_k\leq m-1 \text{  for  }  k= 1, \dots,  n.$$ Then $D$ is a non-special divisor of degree $g(\cX)$ if and only 
if $D$ satisfies one of the following conditions:

\begin{enumerate}[ i)]
	\item\label{teo2-car-div-deg-g-i}  $\sum_{k=1}^{n}j_k=0$  and 
$ n+\displaystyle\sum_{k=1}^{n}\floor*{\frac{\al_k-t_k(i)}{m}}=\be(i) \text{ for every } 1\leq i \leq m-1;$ or

\item\label{teo2-car-div-deg-g-ii} 
$\sum_{k=1}^{n}j_k=-1,$
\begin{align*}
&n-2+\displaystyle\sum_{k=1}^{n}\floor*{\frac{\al_k-t_k(j)}{m}}=\be(j)  \text{ for some } 1\leq j \leq m-1, \text{ and}\\
&n-1+\displaystyle\sum_{k=1}^{n}\floor*{\frac{\al_k-t_k(i)}{m}}=\be(i) \text{ for every } 1\leq i \leq m-1  \text{ with } i\neq j.
\end{align*}
\end{enumerate}
In particular, $D$ is an  non-special effective divisor of degree $g(\cX)$ if and only if 
		\begin{enumerate}[a)]
			\item $D=\sum_{k=1}^{n}\al_kQ_k, \,\,  0\leq \al_k\leq m-1$, and
			\item $
			n+\displaystyle\sum_{k=1}^{n}\floor*{\frac{\al_k-t_k(i)}{m}}=\be(i) \text{ for every } 1\leq i \leq m-1.
			$
		\end{enumerate}
\end{theorem}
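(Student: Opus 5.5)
The plan is to derive Theorem \ref{teo2-car-div-deg-g} directly from Theorem \ref{teo-car-div-deg-g}, mirroring how item ii) of Theorem \ref{teo-car-div-deg-g-1} was obtained from item i). Write $\al_k^\prime = mj_k+\al_k$ with $0\leq \al_k\leq m-1$. The basic identities are
$$\floor*{\frac{\al_k^\prime}{m}}=j_k \quad\text{and}\quad \floor*{\frac{\al_k^\prime-t_k(i)}{m}}=j_k+\floor*{\frac{\al_k-t_k(i)}{m}}.$$
They hold because $0\leq \al_k\leq m-1$ and because $j_k$ is an integer that can be pulled out of the floor.

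Substituting these into condition i) of Theorem \ref{teo-car-div-deg-g} turns it into $\sum_k j_k=0$ together with
$$n+\sum_k \floor*{\frac{\al_k-t_k(i)}{m}}=\be(i)\quad\text{for all } i,$$
which is exactly item i) here. Substituting into condition ii) of Theorem \ref{teo-car-div-deg-g} gives $\sum_k j_k=-1$. Adding this $-1$ to the constants $n-1$ and $n$ shifts them to $n-2$ and $n-1$, which is item ii). Each substitution is reversible, so both implications hold and the equivalence follows at once.

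For the effective case, first suppose $D$ is effective and non-special of degree $g(\cX)$. Effectivity gives $mj_k+\al_k\geq 0$, hence $j_k\geq 0$ for all $k$. This excludes $\sum_k j_k=-1$, so item i) applies. Then $\sum_k j_k=0$ with all $j_k\geq 0$ forces every $j_k=0$, so $D=\sum_k \al_kQ_k$ with $0\leq\al_k\leq m-1$, and condition b) is the second half of item i). Conversely, suppose a) and b) hold. Then $D$ is effective, all $j_k=0$, and so item i) holds. This already gives that $D$ is non-special of degree $g(\cX)$. The degree is built into Theorem \ref{teo-car-div-deg-g}, and one can also confirm it from b) via the summation identity used in the proof of Theorem \ref{teo-car-div-deg-g}.

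The only point needing care is the case split in the effective part, namely checking that effectivity rules out alternative ii). This follows from $j_k\geq 0$, which in turn uses $\al_k\leq m-1<m$.
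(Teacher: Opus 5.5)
Your proposal is correct and takes essentially the same route as the paper. You write $\alpha_k' = mj_k+\alpha_k$, pull $j_k$ out of the floors, and translate conditions i) and ii) of Theorem \ref{teo-car-div-deg-g} directly; for the effective case you also spell out what the paper calls immediate (effectivity forces $j_k\geq 0$, which rules out $\sum_k j_k=-1$ and then forces all $j_k=0$), and that argument is sound.
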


\begin{proof}
It is clear that if the divisor $D$ satisfies condition $i)$ or $ii)$, then $D$ satisfies the conditions of Theorem \ref{teo-car-div-deg-g}. Hence, $D$ is a non-special divisor of degree $g(\cX)$. 

Conversely, if $D=\sum_{k=1}^{n}(mj_k+\al_k)Q_k$ is a non-special divisor of degree $g(\cX)$, where $j_k \in \Z$ and $0\leq \al_k\leq m-1$ for $k= 1, \dots,  n$, then $D$ satisfies one of the conditions of Theorem \ref{teo-car-div-deg-g}.

If $D$ satisfies the condition $i)$ of Theorem \ref{teo-car-div-deg-g} then    
$$0=\sum_{k=1}^{n}\floor*{\frac{mj_k+\al_k}{m}}=\sum_{k=1}^{n}\left(j_k+\floor*{\frac{\al_k}{m}}\right)=\sum_{k=1}^{n}j_k,$$
and for every $1\leq i \leq m-1$
\begin{align*}
\be(i)&=n+\sum_{k=1}^{n}\floor*{\frac{mj_k+\al_k-t_k(i)}{m}}
=n+\sum_{k=1}^{n}\left(j_k+\floor*{\frac{\al_k-t_k(i)}{m}}\right)
\\
&=n+\sum_{k=1}^{n}\floor*{\frac{\al_k-t_k(i)}{m}}.	
\end{align*}

In a similar way, if $D$ satisfies the condition $ii)$ of Theorem \ref{teo-car-div-deg-g} then 
we obtain that 
\begin{align*}
&\bullet \textstyle\sum_{k=1}^{n}j_k=-1,  \\
&\bullet n-2+\textstyle\sum_{k=1}^{n}\floor*{\frac{\al_k-t_k(j)}{m}}=\be(j), \text{ and }\\ &\bullet n-1+\textstyle\sum_{k=1}^{n}\floor*{\frac{\al_k-t_k(i)}{m}}=\be(i) \text{ for every } 1\leq i \leq m-1  \text{ with } i\neq j.
\end{align*}
This concludes the first part of the proof. The last statement of the theorem follows immediately from the first part of the proof.
\end{proof}

\section{Explicit non-special divisors of  degree \texorpdfstring{$g-1$}{g-1}}

Aiming to construct LCP of AG codes based on Theorem \ref{thmlcp3.5}, this section provides a more explicit characterization of non-special divisors of small degree on Kummer extensions. One of these characterizations generalizes the construction of non-special divisors of degree $g-1$ given in \cite[Theorem 12]{CLM2024}  for Kummer extensions of the form 
\begin{equation} \label{separable}
y^m=f(x), \text{ where } f(x) \text{ is separable and  } (\deg(f), m)=1. 
\end{equation}
In that work, the authors use a classification of effective non-special divisors of degree $g$ obtained in \cite[Theorem 8]{CLM2024} and apply Lemma \ref{lemma1} to derive non-special divisors of degree $g-1$ of the form $D-P$, where $D$ is an effective non-special divisor of degree $g$ and $P \notin \supp(D)$ a rational place.  In Theorem \ref{teo_g-1_Kummer}  we present all non special divisors of degree $g-1$  for Kummer extensions as in (\ref{separable}), thereby obtaining additional families of such divisors.

We start by fixing some notation. Let $\mathcal{X}$ be the algebraic curve defined in (\ref{equationX}), and let $Q_1, Q_2, \dots, Q_n$ be the totally ramified places of $\mathbb{F}_q(\mathcal{X})/\mathbb{F}_q(x)$.   	 Denote by 
	 $S_n$ the symmetric group of permutations of the set $\{1, 2, \dots, n\}$ and, for $\negalpha= (\alpha_1, \dots, \alpha_n) \in \Z^n$ and $\sigma \in S_n$, define
$$
\sigma(\negalpha):=(\al_{\sigma(1)}, \dots, \al_{\sigma(n)}).
$$

First, we explicitly determine all non-special divisors of degree $g-1$ for Kummer extensions of the form $y^m=f(x)$ with only one place at infinity and $f(x)$ separable. For this, we need first the following technical simple lemma.

\begin{lemma}\label{lemma_floors}
Let $m, n\in \N$ be positive integers such that $n<m$. Then, for $\al, i\in \Z$ such that $0\leq \al, i \leq m-1$, we have
	$$
	\floor*{\frac{\al+(i+1)n}{m}}-\floor*{\frac{\al+in}{m}}=
	\begin{cases}
		1, & \text{if }i=\ceil*{\frac{km-\al}{n}}-1\text{ with }1\leq k\leq n,\\
		0, & \text{otherwise}.
	\end{cases}
	$$
\end{lemma}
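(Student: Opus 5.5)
Since $0<n<m$, the two numerators $\al+in$ and $\al+(i+1)n$ differ by $n<m$. Hence the difference of the floors is either $0$ or $1$. It equals $1$ exactly when some multiple of $m$ lies in the half-open interval $(\al+in,\,\al+(i+1)n]$, and at most one such multiple can occur. So the plan is to write down this interval condition and then solve it for $i$.

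\textbf{Step 1 (reduction).} Write $x=\al+in$. Then $\floor*{(x+n)/m}-\floor*{x/m}$ counts the integers $k$ with $x<km\le x+n$. Because the interval has length $n<m$, the count is at most one.

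\textbf{Step 2 (solve for $i$).} For a fixed integer $k$, the condition $\al+in<km\le \al+(i+1)n$ is equivalent to
\[
\frac{km-\al}{n}-1\le i<\frac{km-\al}{n}.
\]
The unique integer in this range is $i=\ceil*{\frac{km-\al}{n}}-1$. So the difference equals $1$ precisely when $i=\ceil*{(km-\al)/n}-1$ for some integer $k$, and it equals $0$ otherwise.

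\textbf{Step 3 (range of $k$).} It remains to show that, under $0\le \al,i\le m-1$, the relevant $k$ satisfy $1\le k\le n$.

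First, $km>\al+in\ge 0$, so $k\ge 1$.

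Second, $km\le \al+(i+1)n\le (m-1)+mn$, which gives $k\le n$.

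Conversely, for each $1\le k\le n$ the value $i=\ceil*{(km-\al)/n}-1$ is exactly the integer for which $km$ falls into that interval. The statement of the lemma only asserts the equivalence for $i$ in $[0,m-1]$, so no range check on this $i$ is needed. The case distinction is therefore exactly as stated.

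The only step needing care is the bound $k\le n$ together with the half-open endpoint conventions in Step 2. Everything else is routine.
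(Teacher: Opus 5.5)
Your proof is correct and follows essentially the same route as the paper's. Both arguments note that the difference is $0$ or $1$, rewrite a jump as $\frac{\al+in}{m}<k\le\frac{\al+(i+1)n}{m}$, invert this to $i<\frac{km-\al}{n}\le i+1$, and use $0\le \al, i\le m-1$ to get $1\le k\le n$. The only cosmetic difference is that the paper sets $k=\floor*{\frac{\al+(i+1)n}{m}}$ directly, while you count the multiples of $m$ in the half-open interval $(\al+in,\,\al+(i+1)n]$.
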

\begin{proof}
It is clear that $\lfloor\frac{\al+(i+1)n}{m}\rfloor-\floor*{\frac{\al+in}{m}}$ is equal to $0$ or $1$. 

If
$\lfloor\frac{\al+(i+1)n}{m}\rfloor-\floor*{\frac{\al+in}{m}}=1$, then $$\frac{\al+in}{m}<k:=\floor*{\frac{\al+(i+1)n}{m}}\leq\frac{\al+(i+1)n}{m},$$ where $1\leq k \leq n$ since $0\leq \al, i \leq m-1$. Therefore $i<\frac{km-\al}{n}\leq i+1$ and consequently $i=\ceil*{\frac{km-\al}{n}}-1$.

Conversely, if $i=\ceil*{\frac{km-\al}{n}}-1$ with $1\leq k \leq n$, then $i<\frac{km-\al}{n}\leq i+1$. Therefore $\frac{\al+in}{m}<k\leq\frac{\al+(i+1)n}{m}$. This implies that  $$\floor*{\frac{\al+in}{m}}+1\leq k\leq\floor*{\frac{\al+(i+1)n}{m}},$$ and consequently $\lfloor\frac{\al+(i+1)n}{m}\rfloor\neq\floor*{\frac{\al+in}{m}}$. We conclude  $\lfloor\frac{\al+(i+1)n}{m}\rfloor-\lfloor\frac{\al+in}{m}\rfloor=1$.
\end{proof}

In the next theorem, we explicitly determine all non-special divisors $D$ for algebraic curves $\cY : y^m = f(x)$, where $f(x)$ is a separable polynomial with $(m, \deg(f)) = 1$ and the support of $D$ is contained in the set of places corresponding to zeros and the pole of $f(x)$ in $\mathbb{F}_q(\cY)$. This class of curves often appears in the literature such as the Hermitian curve, Norm-Trace curve \cite{G2003}, and the curve $y^{q^r+1}=x^q+x$ over $\mathbb{F}_{q^{2r}}$ with $r\geq 1$ odd \cite{KKO2001}. 

\begin{theorem}\label{teo_g-1_Kummer}
		Let $\cY$ be the algebraic curve over $\fq$ defined by the affine equation 
		$$\cY: \quad y^m=f(x),$$
		where $m\geq 2$ and $f(x)\in \fq[x]$ is a separable polynomial of degree $2\leq n \leq q$ such that $(m, n)=1$. Let $Q_0$ be the only place at infinity in $\fq(\cY)$, $Q_1, Q_2, \dots, Q_n$ be all rational places in $\fq(\cY)$ corresponding to the zeros of $f(x)$. Define  $\Q=(Q_1, Q_2, \dots, Q_n)$, and  $D\in \Div(\cY)$ a divisor with $\supp(D)\subseteq\{Q_0, Q_1, \dots, Q_n\}$. 
		
		Then $D$ is a non-special divisor of degree $g(\cY)-1$ if and only if \begin{equation}\label{eq0}
			D=\sum_{k=0}^{n}mj_kQ_k+\al_0Q_0+D_{\sigma(\negalpha)}(\Q),
		\end{equation} 
		where $0\leq \al_0 \leq m-1$, $(j_0, \dots, j_n)\in \Z^{n+1}$ is such that $\sum_{k=0}^{n}j_k=-1$, $\sigma\in S_n$, and
		$$
		\negalpha=(\underbrace{0, 0, 0,\dots, 0}_{\floor*{\frac{\al_0+n}{m}}\text{ times}}, \underbrace{1, 1, 1, \dots, 1, 1}_{\floor*{\frac{\al_0+2n}{m}}-\floor*{\frac{\al_0+n}{m}}\text{ times}}, \dots, \underbrace{m-2, m-2, \dots, m-2}_{\floor*{\frac{\al_0+(m-1)n}{m}}-\floor*{\frac{\al_0+(m-2)n}{m}}\text{ times}},  \underbrace{m-1,\dots, m-1}_{\ceil*{\frac{n-\al_0}{m}}\text{ times}}).
		$$	
		
		In particular, if $n<m$ then $D\in \Div(\cY)$ is a non-special divisor of degree $g(\cY)-1$ with $\supp(D)\subseteq\{Q_0, Q_1, \dots, Q_n\}$ if and only if $$
		D=\sum_{k=0}^{n}mj_kQ_k+\al_0Q_0+\sum_{k=1}^{n}\left(\ceil*{\frac{km-\al_0}{n}}-1\right)Q_{\sigma(k)},
		$$
		where $0\leq \al_0\leq m-1$, $(j_0, \dots, j_n)\in \Z^{n+1}$ is such that $\sum_{k=0}^{n}j_k=-1$, and $\sigma\in S_n$.
	\end{theorem}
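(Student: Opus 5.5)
We will apply Theorem \ref{teo-car-div-deg-g-1}(ii) to the $(n+1)$-tuple $(Q_0,Q_1,\dots,Q_n)$ of totally ramified places. Here $r=n+1$, with $\la_k=1$ for $1\le k\le n$ and $\la_0=\nu_{P_\infty}(f)=-n$. Since $(m,n)=1$, all $n+1$ places are totally ramified, and $n+1\le q+1$. The hypothesis $n+1\le q$ needed there requires a little care, but the first step will be to check that the cited results still apply; alternatively, since $Q_0$ is always present, the dimension count in Theorem \ref{teo_dimension_RRS_Kummer} should go through. The data are then explicit: $t_k(i)=i$ for $k\ge1$, and $t_0(i)=(-in)\bmod m$. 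Moreover
$$\be(i)=n\ceil*{\tfrac{i}{m}}+\ceil*{\tfrac{-in}{m}}-1=n-1-\floor*{\tfrac{in}{m}}.$$
Writing $D=\sum_{k=0}^n(mj_k+\al_k)Q_k$ with $0\le\al_k\le m-1$, the criterion becomes $\sum j_k=-1$ together with
$$n-1-\floor*{\tfrac{in}{m}}=n+\floor*{\tfrac{\al_0-t_0(i)}{m}}+\sum_{k=1}^n\floor*{\tfrac{\al_k-i}{m}}\quad(1\le i\le m-1).$$

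Next we simplify each term. For $k\ge1$, $\floor*{(\al_k-i)/m}=-1$ if $\al_k<i$ and $0$ otherwise. Hence the sum equals $-N(i)$, where $N(i):=\#\{k\ge1:\al_k<i\}=\#\{k\ge 1:\al_k\le i-1\}$. For $k=0$, $\floor*{(\al_0-t_0(i))/m}=-1$ exactly when $\al_0<t_0(i)$. Since $t_0(i)=\ceil*{in/m}m-in$, this can be rewritten as $-\big(\ceil*{\tfrac{in}{m}}-\ceil*{\tfrac{in-\al_0}{m}}\big)$, that is, $\floor*{(\al_0-t_0(i))/m}=\ceil*{\frac{in-\al_0}{m}}-\ceil*{\frac{in}{m}}$. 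Because $m\nmid in$ we have $\ceil*{in/m}=\floor*{in/m}+1$, and the condition collapses to
$$N(i)=\ceil*{\tfrac{in-\al_0}{m}}\quad\text{for }1\le i\le m-1 .$$
Equivalently, after converting ceilings to floors, it becomes a condition on $\floor*{(\al_0+(m-i)n)/m}$ via the substitution $i\mapsto m-i$. I expect this bookkeeping, namely getting the ceiling/floor identities and the reindexing to match the counts stated in $\negalpha$, to be the main obstacle. I would verify it at $i=1$ and at $i=m-1$, and check that the $i=m$ value $N(m)=n$ is consistent.

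Once we have $N(i)$ for all $i$, the multiset $\{\al_1,\dots,\al_n\}$ is uniquely determined: the number of $k$ with $\al_k=v$ equals $N(v+1)-N(v)$, with $N(0)=0$ and $N(m)=n$. Thus $(\al_1,\dots,\al_n)$ is a permutation $\sigma(\negalpha)$ of a uniquely determined sorted vector. Computing the differences gives the multiplicities $\floor*{(\al_0+(v+1)n)/m}-\floor*{(\al_0+vn)/m}$ for the value $v$ (up to the reindexing above). The first block has size $\floor*{(\al_0+n)/m}$, and the last block has size $n-\floor*{(\al_0+(m-1)n)/m}=\ceil*{(n-\al_0)/m}$, matching the statement. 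Conversely, any such $\sigma(\negalpha)$ produces the correct $N(i)$, so every divisor of the form \eqref{eq0} is non-special, which gives both directions.

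For the case $n<m$, each block has size $0$ or $1$, and Lemma \ref{lemma_floors} identifies the values $v$ with block size $1$ as exactly $v=\ceil*{(km-\al_0)/n}-1$ for $1\le k\le n$. These are $n$ distinct values, so $\sigma(\negalpha)$ is a permutation of them, which yields the stated form.
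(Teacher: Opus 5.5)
Your overall route is the paper's. You apply Theorem~\ref{teo-car-div-deg-g-1}(ii) to $(Q_0,\dots,Q_n)$ with $\beta(i)=n-1-\lfloor in/m\rfloor$, count how many $\alpha_k$ lie below $i$ (your $N(i)$ is the paper's $\gamma_0+\cdots+\gamma_{i-1}$), solve the triangular system, and finish the case $n<m$ with Lemma~\ref{lemma_floors}. The paper also applies the criterion to the $(n+1)$-tuple without comment, so the $n+1\le q$ point you raise is not where you diverge.

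The gap is in the step you yourself flagged as the main obstacle: your identity for the $Q_0$-term is false. Since $t_0(i)=m\lceil in/m\rceil-in$, we have $\alpha_0-t_0(i)=(\alpha_0+in)-m\lceil in/m\rceil$, hence
\[
\Big\lfloor \frac{\alpha_0-t_0(i)}{m}\Big\rfloor=\Big\lfloor\frac{\alpha_0+in}{m}\Big\rfloor-\Big\lceil\frac{in}{m}\Big\rceil ,
\]
and not $\lceil (in-\alpha_0)/m\rceil-\lceil in/m\rceil$. Take $m=3$, $n=2$, $i=1$, $\alpha_0=0$. Then $t_0(1)=1$, so the left side is $-1$, while your expression gives $0$. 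With the correct identity the condition becomes $N(i)=\lfloor(\alpha_0+in)/m\rfloor$ for $1\le i\le m-1$. Differencing then gives exactly the block sizes of $\negalpha$, with no reindexing needed.

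Your condition $N(i)=\lceil (in-\alpha_0)/m\rceil$ is not equivalent to the correct one. Under $i\mapsto m-i$ it says $\#\{k:\alpha_k\ge m-i\}=\lfloor(\alpha_0+in)/m\rfloor$, so it produces the multiset reflected by $v\mapsto m-1-v$. A permutation $\sigma$ only reorders coordinates and cannot undo a reflection of values, so ``up to the reindexing above'' hides a real discrepancy. In fact your own $N(1)=\lceil (n-\alpha_0)/m\rceil$ contradicts your later claim that the first block has size $\lfloor(\alpha_0+n)/m\rfloor$.

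Concretely, take $m=3$, $n=2$ (genus $1$), $\alpha_0=0$, $j_0=-1$, other $j_k=0$. Your condition forces $\{\alpha_1,\alpha_2\}=\{0,1\}$, so $D$ would have degree $-3+0+1=-2\neq g-1=0$. The correct multiset is $\{1,2\}$, giving $D=-3Q_0+Q_1+2Q_2$. As written, the proposal therefore does not establish the stated form of $\negalpha$ and only asserts the final match. Once the floor identity is corrected, the rest goes through as you describe, including the $n<m$ case via Lemma~\ref{lemma_floors}.
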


	\begin{proof}
		 We begin by noting that since $f(x)$ is separable we have $\lambda_0=-n$ and $\lambda_k=1$ for $1\leq k \leq n$.  So, by definition, the following two functions in (\ref{funcoestsbi}) simplify: $t_0(i)= (-in) \mod m$,  $t_k(i)=i$ for $1\leq k\leq n$, and 
			\begin{equation}\label{def}
\beta(i)=\sum_{k=0}^n \ceil*{\frac{i\lambda_k}{m}}-1=\ceil*{\frac{-in}{m}}+n-1=n-1-\floor*{\frac{in}{m}}.
			\end{equation}
Suppose that there exists a non-special divisor $D\in \Div(\cY)$ of degree $g(\cY)-1$ with $\supp(D)\subseteq\{Q_0, Q_1, Q_2, \dots, Q_n\}$. From Theorem \ref{teo-car-div-deg-g-1}, 
		$$
		D=\sum_{k=0}^{n}mj_kQ_k+\sum_{k=0}^{n}\al_kQ_k,
		$$
		where $(j_0, j_1, \dots, j_n)\in \Z^{n+1}$ is such that $\sum_{k=0}^{n}j_k=-1$, $0\leq \al_k\leq m-1$ for every $0\leq k \leq n$, and
		\begin{equation}\label{eq1}
			\be(i)=n+\floor*{\frac{\al_0-t_0(i)}{m}}+\sum_{k=1}^{n}\floor*{\frac{\al_k-i}{m}} \quad \text{for every}\quad 1\leq i \leq m-1.
		\end{equation}
		Now, for each $0\leq j \leq m-1$, define $\gamma_j:=|\{1\leq k\leq n: \al_k=j\}|$. Then $\sum_{i=0}^{m-1}\gamma_i=n$ and
		\begin{align*} 
				n-1-\floor*{\frac{in}{m}}&=n+\floor*{\frac{\al_0-t_0(i)}{m}}+\sum_{k=1}^{n}\floor*{\frac{\al_k-i}{m}}\\
				&=n+\floor*{\frac{\al_0+in}{m}}-\ceil*{\frac{in}{m}}-\sum_{k=1}^{i-1}\gamma_k.
			\end{align*}
			This yields 
		$$
		\gamma_0+\gamma_1+\cdots+\gamma_{i-1}=\floor*{\frac{\al_0+in}{m}}\quad \text{for every }1\leq i \leq m-1.
		$$
		Thus, we obtain the system of equations 
		$$
		\begin{cases}
			\gamma_0=\floor*{\frac{\al_0+n}{m}}\\
			\gamma_0+\gamma_1=\floor*{\frac{\al_0+2n}{m}}\\
			\vdots\\
			\gamma_0+\gamma_1+\cdots+\gamma_{m-2}=\floor*{\frac{\al_0+(m-1)n}{m}}\\
			\gamma_0+\gamma_1+\cdots+\gamma_{m-1}=n
		\end{cases},
		$$
		whose unique solution is $$\gamma_i=\floor*{\frac{\al_0+(i+1)n}{m}}-\floor*{\frac{\al_0+in}{m}}\quad  \text{for} \quad 0\leq i\leq m-1.
		$$
		Therefore, the divisor $D$ has the desired form.
			 
Conversely, if $D$ is defined as in (\ref{eq0}), then $D$ satisfies the conditions given in Theorem \ref{teo-car-div-deg-g-1}.
Thus, $D$ is a non-special divisor of degree $g(\cY)-1$.

		 In particular, if $n< m$ then $D$ has the desired form by Lemma \ref{lemma_floors}.
	\end{proof}

\begin{remark}\label{remark_1}
Following the notation as in Theorem \ref{teo_g-1_Kummer},  in  \cite[Theorem 12]{CLM2024}, the authors determined some non-special divisors of degree $g(\cY)-1$ with support contained in $\{Q_0, Q_1, \dots, Q_n\}$. In Theorem \ref{teo_g-1_Kummer} we determine all such divisors, generalizing this result. In particular, for $\alpha_0=m-1$, $j_0=-1$, and $j_1=j_2=\cdots=j_n=0$ in the previous theorem, we obtain all divisors determined in \cite[Theorem 12]{CLM2024}.

On the other hand, for $\alpha_0=0$ we obtain an explicit description of all non-special divisors of degree $g(\cY)-1$ with support contained in the set $\{Q_1, Q_2, \dots, Q_n\}$ of the zeros of the polynomial $f(x)$. This description remains valid without assuming the condition $(m, n)=1$.
\end{remark}

In the next theorem we explore other applications of constructions of non-special divisors $D$ of degree $g-1$ using results from the last section. We focus in Kummer extensions $\cX: \, y^m=f(x)$, where $f(x)$ is not necessarily separable and the support of  $D$ is contained in a certain set of totally ramified places of $\fq(\cX)/\fq(x)$ with multiplicity satisfying a special condition.

\begin{theorem}\label{carac-grad-g-1}
	Let $\mathcal{X}$ be the algebraic curve defined in (\ref{equationX}) and  $Q_1, Q_2, \dots, Q_n$ be totally ramified places of $\mathbb{F}_q(\mathcal{X})/\mathbb{F}_q(x)$, where $2 \leq n \leq q$. Suppose $\la_s\equiv 1\bmod{m}$ for every $1\leq s \leq n$. Then there exists a non-special divisor $D$ of degree $g(\cX)-1$ with $\supp(D)\subseteq\{Q_1, Q_2, \dots, Q_n\}$ if and only if 
	$$\be(m-1)\leq \cdots \leq \be(2) \leq \be(1) \leq n-1,$$
	and
	\begin{equation}\label{equation_form_D}
	D=mD_{\negj}(\Q)+D_{\sigma(\negalpha)}(\Q),
	\end{equation}
	where
	$\negj=(j_1, \dots, j_n)\in \Z^n$ is such that $\sum_{k=1}^{n}j_k=-1$, $\sigma\in S_n$, and
	$$
	\negalpha=(\underbrace{0, 0,0,\dots,0}_{n-1-\be(1)\text{ times}}, \underbrace{1, 1, 1, \dots, 1}_{\be(1)-\be(2)\text{ times}}, \dots, \underbrace{m-2, \dots, m-2}_{\be(m-2)-\be(m-1)\text{ times}},  \underbrace{m-1,\dots, m-1}_{\be(m-1)+1\text{ times}}) \in \Z^n.
	$$

\end{theorem}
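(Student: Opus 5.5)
Since $\la_s\equiv 1 \bmod m$ for $1\leq s\leq n$, we have $t_s(i)=(i\la_s)\bmod m = i$ for every $1\leq i\leq m-1$ and every $s$. The plan is to reduce everything to item $ii)$ of Theorem \ref{teo-car-div-deg-g-1}, exactly as in the proof of Theorem \ref{teo_g-1_Kummer}. By that result, $D$ with $\supp(D)\subseteq\{Q_1,\dots,Q_n\}$ is non-special of degree $g(\cX)-1$ if and only if
\[
D=\sum_{k=1}^n (mj_k+\al_k)Q_k, \qquad 0\leq \al_k\leq m-1, \qquad \sum_k j_k=-1,
\]
and
\[
\be(i)=n-1+\sum_{k=1}^n\floor*{\frac{\al_k-i}{m}} \quad \text{for } 1\leq i\leq m-1.
\]
Since $0\leq \al_k\leq m-1$ and $1\leq i\leq m-1$, each floor is $-1$ if $\al_k<i$ and $0$ otherwise.

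For $0\leq j\leq m-1$ we introduce $\gamma_j:=|\{k:\al_k=j\}|$, so that $\sum_j\gamma_j=n$. The condition then becomes
\[
\be(i)=n-1-(\gamma_0+\cdots+\gamma_{i-1}), \qquad 1\leq i\leq m-1.
\]
This triangular system determines the $\gamma_j$ uniquely:
\[
\gamma_0=n-1-\be(1),\qquad \gamma_{i}=\be(i)-\be(i+1)\ \ (1\leq i\leq m-2),\qquad \gamma_{m-1}=n-\sum_{j<m-1}\gamma_j=\be(m-1)+1.
\]
These match the multiplicities in $\negalpha$. The vector $(\al_1,\dots,\al_n)$ is then determined up to a permutation $\sigma\in S_n$, which gives the form (\ref{equation_form_D}) with $D_{\negj}(\Q)=\sum_k j_kQ_k$.

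For the existence equivalence, note that the $\gamma_j$ must be nonnegative integers. The requirement $\gamma_0\geq 0$ gives $\be(1)\leq n-1$, and $\gamma_i\geq 0$ gives $\be(i+1)\leq\be(i)$. The requirement $\gamma_{m-1}\geq 0$ is $\be(m-1)\geq -1$, and I will check that this is automatic: $\be(i)\geq 0$ because $\be(i)$ counts gaps-related quantities, or directly because $\sum_k\ceil*{i\la_k/m}\geq 1$, since $\sum_k\la_k=0$ over all places and $f$ is not a $d$-th power, so some $i\la_k/m$ is a positive non-integer. Conversely, if the chain of inequalities holds, we define $\negalpha$ as stated (its length is $n$ by telescoping), choose any $\negj$ with sum $-1$, and verify the displayed condition by reversing the computation. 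Theorem \ref{teo-car-div-deg-g-1} then gives non-speciality.

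I expect the only delicate point to be confirming $\be(m-1)\geq -1$ (and that the count of entries equals $n$); everything else is the same bookkeeping as in Theorem \ref{teo_g-1_Kummer}, simplified by $t_k(i)=i$.
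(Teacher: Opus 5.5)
Your proposal is correct and follows the paper's proof essentially verbatim. Like the paper, you reduce to item ii) of Theorem \ref{teo-car-div-deg-g-1} using $t_k(i)=i$, count the multiplicities $\gamma_j$, and solve the triangular system $\be(i)=n-1-(\gamma_0+\cdots+\gamma_{i-1})$, where nonnegativity of the $\gamma_j$ yields the chain of inequalities.

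You also check that $\gamma_{m-1}=\be(m-1)+1\geq 0$ is automatic, which the paper leaves implicit. That check is better justified by noting that $\la_1\equiv 1 \bmod m$ gives $(\la_1,m)=1$. Hence $i\la_1/m\notin\Z$ for $1\leq i\leq m-1$, and so $\sum_k\ceil*{i\la_k/m}>\sum_k i\la_k/m=0$. Your appeal to the ``not a $d$-th power'' hypothesis does not by itself force a non-integer multiplicity ratio, because of possible constant factors.
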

\begin{proof}
	We begin by noting that if $\lambda_s \equiv 1 \bmod{m}$, then $t_s(i) = i$ for every $1 \leq i \leq m-1$.
	Suppose 
	$$\be(m-1)\leq \cdots \leq \be(2) \leq \be(1) \leq n-1$$ 
and $D$ is of the form \eqref{equation_form_D}. Then, $D$  satisfies Theorem \ref{teo-car-div-deg-g-1}, and we conclude it is a non-special divisor of degree $g(\cX)-1$.
	
	Conversely, suppose that there exists a non-special divisor $D\in \Div(\cX)$ of degree $g(\cX)-1$ with $\supp(D)\subseteq\{Q_1, Q_2, \dots, Q_n\}$. From Theorem \ref{teo-car-div-deg-g-1}, 
	$$
	D=mD_{\negj}(\Q)+\sum_{k=1}^{n}\al_kQ_k,
	$$
	where $\negj=(j_1, \dots, j_n)\in \Z^n$ is such that $\sum_{k=1}^{n}j_k=-1$, $0\leq \al_k\leq m-1$ for every $1\leq k \leq n$, and 
	\begin{equation}\label{eq_condition_g-1}
		\be(i)=n-1+\sum_{k=1}^{n}\floor*{\frac{\al_k-i}{m}} \quad \text{for every}\quad 1\leq i \leq m-1.
	\end{equation}
	Now, for each $0\leq j \leq m-1$, define $\gamma_j:=|\{1\leq k\leq n: \al_k=j\}|$. Then $\sum_{i=0}^{m-1}\gamma_i=n$. Moreover, from Equation (\ref{eq_condition_g-1}) we have that, for each $1\leq i \leq m-1$,
	$$
	\be(i)=n-1-\gamma_0-\gamma_1-\cdots-\gamma_{i-1}.
	$$
	Which is equivalent to the following system of equations 
	$$
	\begin{cases}
		\gamma_0=n-1-\be(1)\\
		\gamma_0+\gamma_1=n-1-\be(2)\\
		\vdots\\
		\gamma_0+\gamma_1+\cdots+\gamma_{m-2}=n-1-\be(m-1)\\
		\gamma_0+\gamma_1+\cdots+\gamma_{m-1}=n
	\end{cases},
	$$
	whose unique solution is $\gamma_0=n-1-\be(1)$, $\gamma_i=\be(i)-\be(i+1)$ for every $1\leq i\leq m-2$, and $\gamma_{m-1}=\be(m-1)+1$. Finally, since by definition $0\leq \gamma_{i}$ for every $0\leq i \leq m-1$, we conclude that $\be(m-1)\leq \cdots \leq \be(2) \leq \be(1)\leq n-1$ and $D$ has the desired form.
\end{proof}

We now present a family of curves suitable for applying the previous theorem, followed by an illustrative example.
	\begin{corollary}\label{coresquisita}
Let $\cW$ be the curve over $\fq$ defined by the equation 
$$
\cW:\quad y^m=h_1(x)h_2(x)^\la,$$
where $h_1, h_2\in \fq[x]$ are coprime and separable polynomials of degree $n$ and $s$ respectively,  
and $1\leq \la\leq m-1$. Let $Q_1, Q_2, \dots, Q_{n}$ be the places in $\fq(\cW)$ associated to the zeros of $h_1(x)$. 

If $n\geq s(m-\la)$, then $D\in \Div (\cW)$ is a non-special divisor of degree $g(\cW)-1$ with support contained in $\{Q_1, \dots, Q_n\}$ if and only if $D$ is of the form given in Equation (\ref{equation_form_D}).
	\end{corollary}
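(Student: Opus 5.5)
The plan is to reduce Corollary~\ref{coresquisita} to Theorem~\ref{carac-grad-g-1}. That theorem already gives exactly the form (\ref{equation_form_D}), provided two things hold: its hypotheses are satisfied, and the chain of inequalities $\be(m-1)\leq\cdots\leq\be(1)\leq n-1$ is true for $\cW$. So the work is to check these hypotheses and then verify that chain.

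\textbf{Step 1: the hypotheses of Theorem~\ref{carac-grad-g-1}.} Each zero of $h_1$ is a simple zero of $f=h_1h_2^{\la}$, because $h_1$ is separable and coprime to $h_2$. Hence $\la_k=1$ for $1\leq k\leq n$, and so $\la_k\equiv 1 \bmod m$. Since $(1,m)=1$, these places are totally ramified in $\fq(\cW)/\fq(x)$, so $Q_1,\dots,Q_n$ are well defined. We keep the standing assumptions of the paper: $2\leq n\leq q$, all roots of $f$ lie in $\fq$, and $g(\cW)\geq 1$. Moreover $f$ is not a $d$-th power for any $d\mid m$ with $d\geq 2$, because it has simple roots.

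\textbf{Step 2: a formula for $\be(i)$.} The places of $\fq(x)$ attached to $f$ are of three kinds:
\begin{itemize}
\item the $n$ zeros of $h_1$, each with multiplicity $1$;
\item the $s$ zeros of $h_2$, each with multiplicity $\la$;
\item the pole, with multiplicity $-(n+s\la)$.
\end{itemize}
Put $N:=n+s\la$. Using $\ceil*{-a}=-\floor*{a}$, definition (\ref{funcoestsbi}) gives
$$\be(i)=n-1+s\ceil*{\frac{i\la}{m}}-\floor*{\frac{iN}{m}}.$$

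\textbf{Step 3: the inequality $\be(1)\leq n-1$.} Since $1\leq\la\leq m-1$, we have $\ceil*{\la/m}=1$, so $\be(1)=n-1+s-\floor*{N/m}$. Thus $\be(1)\leq n-1$ is equivalent to $\floor*{N/m}\geq s$, that is, to $n+s\la\geq sm$. This is exactly the hypothesis $n\geq s(m-\la)$.

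\textbf{Step 4: monotonicity of $\be$ (the main step).} For $1\leq i\leq m-2$ we have
$$\be(i)-\be(i+1)=\left(\floor*{\frac{(i+1)N}{m}}-\floor*{\frac{iN}{m}}\right)-s\left(\ceil*{\frac{(i+1)\la}{m}}-\ceil*{\frac{i\la}{m}}\right).$$
We bound the two brackets separately.
\begin{itemize}
\item By superadditivity of the floor function, $\floor*{a+b}\geq\floor*{a}+\floor*{b}$, the first bracket is at least $\floor*{N/m}$. By Step 3 this is at least $s$.
\item Since $0<\la<m$, the increment $\ceil*{(i+1)\la/m}-\ceil*{i\la/m}$ is either $0$ or $1$, so the second bracket is at most $s$.
\end{itemize}
Hence $\be(i)-\be(i+1)\geq 0$, which gives $\be(m-1)\leq\cdots\leq\be(1)\leq n-1$.

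\textbf{Step 5: conclusion.} With the hypotheses of Step 1 and the chain from Steps 3 and 4, Theorem~\ref{carac-grad-g-1} applies directly. It states that $D$ is a non-special divisor of degree $g(\cW)-1$ with support in $\{Q_1,\dots,Q_n\}$ if and only if $D$ has the form (\ref{equation_form_D}).

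The only place where the hypothesis $n\geq s(m-\la)$ is genuinely used is Step 3, and Step 4 reuses it through $\floor*{N/m}\geq s$. I expect Step 4 to be the main point needing care, because it is where the two floor and ceiling bounds must line up exactly.
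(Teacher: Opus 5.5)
Your proposal is correct and follows the paper's proof. Like the paper, you reduce to Theorem~\ref{carac-grad-g-1} and verify the chain $\be(m-1)\leq\cdots\leq\be(1)\leq n-1$ by bounding $\be(i)-\be(i+1)$ from below by $\floor*{\frac{n+\la s}{m}}-s=\floor*{\frac{n-s(m-\la)}{m}}\geq 0$; you also write out the check $\be(1)\leq n-1$ explicitly, where the paper only says it holds ``analogously''.
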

	\begin{proof}
	This result is a consequence of Theorem \ref{carac-grad-g-1}. To apply it to the curve $\cW$, it is sufficient to verify that $\be(m-1)\leq \cdots \leq \be(2) \leq \be(1)\leq n-1$. For this, note that for $1\leq i \leq m-2$ we have
	\begin{align*}
	\be(i)-\be(i+1)&=s\left(\ceil*{\frac{i\la}{m}}-\ceil*{\frac{(i+1)\la}{m}}\right)+\floor*{\frac{(i+1)(n+\la s)}{m}}-\floor*{\frac{i(n+\la s)}{m}}\\
	&\geq s\left(\ceil*{\frac{-\la}{m}}-1\right)+\floor*{\frac{n+\la s}{m}}\\
	&=\floor*{\frac{n+\la s}{m}}-s\\
	&=\floor*{\frac{n-s(m-\la)}{m}}\geq 0.
	\end{align*}  
	Analogously, $\be(1)\leq n-1$. Hence, the result follows.
	\end{proof}

	\begin{example}\label{exampleZ}
In this example, we apply Corollary \ref{coresquisita} to a family of maximal curves obtained in \cite[Theorem 3.4]{MQ2022} as a subcover of the $BM$ curve.

Let $r \geq 3$ be odd, $m \geq 2$ a divisor of $\frac{q^r+1}{q+1}$, and $1 \leq d < m$ a divisor of $q-1$. Consider the curve over $\mathbb{F}_{q^{2r}}$ given by the affine equation
$$\mathcal{Z}:\quad y^m = x^{m-d}(1-x^{d(q+1)}).$$
The curve $\mathcal{Z}$ is an $\mathbb{F}_{q^{2r}}$-maximal curve with genus $g(\mathcal{Z})=\frac{d(m-1)(q+1)}{2}$. With notation as in Corollary \ref{coresquisita}, we have that $n=d(q+1)$, $s=1$, and $\la=m-d$. Thus 
$$n-s(m-\la)=d(q+1)-d=dq\geq 0$$ and therefore the conditions of Corollary \ref{coresquisita} are satisfy. This implies that all non-special divisor of degree $g(\mathcal{Z})-1$ with support in the zeros $Q_1, Q_2, \dots, Q_{d(q+1)}$ of $1-x^{d(q+1)}$ are of the form \eqref{equation_form_D}, where 
$$d(q+1)-1-\be(1)=\floor*{\frac{dq}{m}}, \quad \be(m-1)+1=\ceil*{\frac{dq}{m}}+1,$$
and
$$
\be(i)-\be(i+1)=\floor*{\frac{(i+1)d}{m}}-\floor*{\frac{id}{m}}+\floor*{\frac{(i+1)dq}{m}}-\floor*{\frac{idq}{m}} \geq 0  \quad \text{for }1\leq i\leq m-2.
$$

\end{example}

	As another consequence of Theorem \ref{carac-grad-g-1}, for the Hermitian curve $\mathcal{H}_q: y^{q+1}=x^q+x$ over $\fqs$, we obtain a simple explicit expression for all non-special divisors of genus $g(\cH_q)-1$ whose support is contained in the totally ramified places of $\fqs(\cH)/\fqs(x)$.

	\begin{corollary}
		Let $\mathcal{H}_q$ be the Hermitian curve over $\fqs$ defined by the affine equation $y^{q+1}=x^q+x$. Let $Q_0, Q_1, \dots, Q_{q}$ be all the totally ramified places in the extension $\fqs(\mathcal{H}_q)/\fqs(x)$, and $\Q=(Q_0, Q_1, \dots, Q_{q})$. Then $D\in \Div(\mathcal{H}_q)$ is a non-special divisor of degree $g(\mathcal{H}_q)-1$ with $\supp(D)\subseteq \{Q_0, Q_1, \dots, Q_{q}\}$ if and only if
		$$
		D=(q+1)D_{\negj}(\Q)+\sum_{k=0}^{q}kQ_{s_k},
		$$
		where $\negj=(j_0, \dots, j_{q})\in \Z^{q+1}$ is such that $\sum_{k=0}^{q}j_k=-1$ and $Q_{s_1}, \dots, Q_{s_q}\in \{Q_0, \dots, Q_{q}\}$ are pairwise distinct.
	\end{corollary}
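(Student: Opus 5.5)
We derive this from Theorem \ref{carac-grad-g-1}, applied with $m=q+1$, $f(x)=x^q+x$ and $n=q+1$ places $Q_0,\dots,Q_q$.

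First we check the hypotheses. The polynomial $x^q+x$ is separable of degree $q$ with all roots in $\fqs$, since the roots satisfy $x^q=-x$. Hence each finite root $P_k$ ($1\le k\le q$) has $\la_k=1$, and the pole has $\la_0=-q$. As $-q\equiv 1 \pmod{q+1}$, every $\la_s$ satisfies $\la_s\equiv 1\bmod m$, so in particular $t_s(i)=i$ for all $s$. Also $(q,q+1)=1$, so all $q+1$ places $P_0,\dots,P_q$ are totally ramified and $r=n=q+1$. We have $2\le n=q+1\le q^2$, so the condition $n\le |\fqs|$ holds.

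Next we compute $\be(i)$ from (\ref{funcoestsbi}). For $1\le i\le q$,
$$\be(i)=q\ceil*{\frac{i}{q+1}}+\ceil*{\frac{-iq}{q+1}}-1 = q-\floor*{\frac{iq}{q+1}}-1 .$$
Since $iq/(q+1)=i-i/(q+1)$ with $0<i/(q+1)<1$, we get $\floor*{\frac{iq}{q+1}}=i-1$, and therefore $\be(i)=q-i$. Thus
$$\be(q)\le\cdots\le\be(1)=q-1\le n-1=q,$$
so the chain condition of Theorem \ref{carac-grad-g-1} holds. The theorem then states that the non-special divisors of degree $g-1$ supported in $\{Q_0,\dots,Q_q\}$ are exactly those of the form (\ref{equation_form_D}).

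Finally we read off the multiplicities in $\negalpha$:
\begin{itemize}
\item the entry $0$ occurs $n-1-\be(1)=q-(q-1)=1$ time;
\item each entry $i$ with $1\le i\le m-2=q-1$ occurs $\be(i)-\be(i+1)=1$ time;
\item the entry $m-1=q$ occurs $\be(q)+1=1$ time.
\end{itemize}
So $\negalpha=(0,1,\dots,q)$. A permutation $\sigma$ of its coordinates is the same as assigning the value $k$ to a place $Q_{s_k}$, with the $Q_{s_k}$ pairwise distinct. Hence $D_{\sigma(\negalpha)}(\Q)=\sum_{k=0}^q kQ_{s_k}$, and $(q+1)D_{\negj}(\Q)$ with $\sum_k j_k=-1$ gives the stated form.

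The only step needing care is the indexing: the places are labelled from $0$ here, while Theorem \ref{carac-grad-g-1} labels them from $1$, and the pole must be included among the totally ramified places (allowed by the note in the setup). The floor computation for $\be(i)$ is the one genuine calculation, and it is routine.
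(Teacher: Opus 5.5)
Your proof is correct and follows the paper's argument exactly. Both check that every totally ramified place has multiplicity $\equiv 1 \bmod{q+1}$, compute $\be(i)=q-i$ so that $\be(q)\leq\cdots\leq\be(1)\leq q$, and invoke Theorem \ref{carac-grad-g-1}; you additionally spell out the floor computation and the fact that the resulting $\negalpha$ is $(0,1,\dots,q)$, which the paper leaves implicit.
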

	\begin{proof}
	Note that, for the Hermitian curve $\cH_q$, the multiplicity of any totally ramified place of $\fqs(\cH_q)/\fqs(x)$ is congruent to $1$ modulo $q+1$. Moreover, $\be(i)=q-i$ for every $1\leq i \leq q$ and therefore $\be(q)\leq \cdots \leq \be(1) \leq q$. Thus, the result follows from Theorem \ref{carac-grad-g-1}.	
	\end{proof}

\section{Construction of Linear Complementary pairs of AG codes}

Taking advantage of the classification of non-special divisors of degree $g-1$ with support contained in the totally ramified  places  in a Kummer extension as in Theorem \ref{teo-car-div-deg-g-1}, we present general constructions of LCP of AG codes. Some of these constructions generalize the  constructions that have been made in \cite{CLM2024} and \cite{CMQ2025}, in the sense that the non-special divisor of degree $g-1$ is generic.

Through this section we fix the following notation for the Kummer extension that will be used: 
\begin{enumerate}[i)]
\item Let $F:=\fq(\cX)/\fq(x)$ be a Kummer extension of genus $g\geq 1$ defined by the affine equation
\begin{equation}\label{kummer-notseparable} \cX:\quad y^m=f(x)=\prod_{k=1}^{r}(x-a_k)^{\la_k}, \quad  1\leq \la_k\leq m-1 \text{ for every }1\leq k \leq r,
\end{equation}
	where $m, r\geq 2$, $\char(\fq)\nmid m$, $a_1, \dots, a_r\in \fq$ are pairwise distinct, and $\la_0:=\sum_{k=1}^{r}\la_k$ is coprime with $m$. 
	
	\item Let $Q_0$ be the only place at infinity in $\fq(\cX)$ and assume that $Q_1, Q_2, \dots, Q_n$  are totally ramified places in $\fq(\cX)/\fq(x)$ corresponding to the zeros $a_1 \dots, a_n$ of $f(x)$ respectively, where $2 \leq n \leq r$. 
	
	\item  For $1\leq i\leq t$, choose  $P_i$ in $\fq(x)$ a rational place which splits completely in $\fq(\cX)/\fq(x)$, and let $\{R_1,\ldots, R_N\}$ be the set of all $N=mt$ rational places in $\fq(\cX)$ lying over the places $P_1,\ldots, P_t$.
	
	\item \label{equiv} Let $P_b$ be a rational place in  $\fq(x)$ corresponding to $b \in \fq$. If $Q$ is the only place over $P_b$, then  $mQ \sim mQ_0$ since $(x-b)_{\fq(\cX)}=mQ-mQ_0$. If $R_1, \dots, R_m$ are all the places over $P_b$, then $R_1+ \cdots +R_m \sim mQ_0$ since $(x-b)_{\fq(\cX)}=R_1+ \cdots+R_m-mQ_0$.
\end{enumerate}

\begin{theorem}\label{Teocodes1}
	Let $F$ be a Kummer extension. Fix the divisors
 $D=\sum_{k=1}^N R_k$, and $ E=\sum_{k=0}^n \alpha_k Q_k$ a non-special divisor of degree $g-1$. For an integer $s$ such that $\frac{g-1}{\la_0}< s <\frac{N+1-g}{\la_0}$, consider the divisors
\begin{align*}
&G=\sum_{k=1}^n \alpha_k Q_k+(\alpha_0+N-s\la_0)Q_0, \text{ and }\\
 &H=\sum_{k=1}^n (s\la_k+\alpha_k)Q_k+\alpha_0 Q_0+\sum_{k=n+1}^{r}\frac{s\la_k}{(m, \la_k)}\sum_{Q\in \cP_{\fq(\cX)}, Q|P_{a_i}}Q. 
\end{align*}
Then $(C_{\mathcal{L}}(D, G),C_{\mathcal{L}}(D, H))$ is an LCP  of algebraic geometric codes with parameters
$$[N, N-s\la_0] \quad  \text{ and } \quad [N, s\la_0], \text{ respectively}.$$
		\end{theorem}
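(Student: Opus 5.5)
We plan to apply Theorem \ref{thmlcp3.5} directly. First we compute degrees. Since $Q_0,\dots,Q_n$ are rational, $\deg G=\sum_{k=0}^n\alpha_k+N-s\la_0=g-1+N-s\la_0$. For $H$, each $P_{a_k}$ with $k>n$ has $(m,\la_k)$ places over it, each with ramification index $m/(m,\la_k)$ and residue degree one, since the roots lie in $\fq$. So the inner sum over $Q\mid P_{a_k}$ has degree $(m,\la_k)$, and the coefficient $s\la_k/(m,\la_k)$ is an integer. It follows that $\deg H=g-1+s\sum_{k=1}^r\la_k=g-1+s\la_0$.

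With these degrees, condition i) holds: $\deg G+\deg H=N+2g-2$. The bounds $2g-2<\deg G,\deg H<N$ are exactly equivalent to $\frac{g-1}{\la_0}<s<\frac{N+1-g}{\la_0}$. The supports of $G$ and $H$ lie over zeros of $f$ and infinity, while $D$ lies over places that split completely. These cannot coincide, so both supports are disjoint from $\supp D$.

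Next we compute $\gcd(G,H)$ and $\lmd(G,H)$. Coefficientwise, since $s\la_k>0$ and $N-s\la_0>0$, we get $\gcd(G,H)=\sum_{k=0}^n\alpha_kQ_k=E$. This has degree $g-1$ and is non-special by hypothesis, which gives condition ii) and half of condition iii). Using \eqref{basicP}, $\lmd(G,H)=G+H-E$, so
\[\lmd(G,H)-D=\sum_{k=1}^n(s\la_k+\alpha_k)Q_k+(\alpha_0+N-s\la_0)Q_0+\sum_{k>n}\tfrac{s\la_k}{(m,\la_k)}\sum_{Q\mid P_{a_k}}Q-D.\]

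The main step is showing that this divisor is non-special. We will show it is linearly equivalent to $E$. By item \ref{equiv}), $D=\sum_{i=1}^t(R\text{'s over }P_i)\sim tmQ_0=NQ_0$. Also the principal divisor of $(x-a_k)^s$ is $s\,\frac{m}{(m,\la_k)}\sum_{Q\mid P_{a_k}}Q-smQ_0$. For the non-uniform coefficients it is cleaner to use a single function: $(y^s)=s\sum_{k=1}^n\la_kQ_k+\sum_{k>n}\frac{s\la_k}{(m,\la_k)}\sum_{Q\mid P_{a_k}}Q-s\la_0Q_0$. Here the valuation of $y$ at a place over $P_{a_k}$ is $\la_k e/m=\la_k/(m,\la_k)$, and at $Q_0$ it is $-\la_0$, because $Q_0$ is totally ramified since $(\la_0,m)=1$. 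Subtracting $(y^s)$ from $\lmd(G,H)-D$ leaves $\sum_{k=1}^n\alpha_kQ_k+(\alpha_0+N)Q_0-D\sim E$. Since $\ell$ and $\deg$ are invariant under linear equivalence, $\lmd(G,H)-D$ is non-special of degree $g-1$, which completes condition iii).

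Theorem \ref{thmlcp3.5} now gives the LCP. Proposition \ref{parameters} gives the dimensions $\ell(G)=\deg G+1-g=N-s\la_0$ and $\ell(H)=s\la_0$. We expect the main obstacle to be the careful valuation bookkeeping for $y$ at the non-totally-ramified zeros $a_k$ with $k>n$. There one has to check that the coefficient $s\la_k/(m,\la_k)$ matches $\nu_Q(y^s)$ exactly; we will verify this via $e=m/(m,\la_k)$.
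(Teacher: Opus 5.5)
Your proposal is correct and follows essentially the same route as the paper: $\gcd(G,H)=E$, $D\sim NQ_0$, and subtracting the principal divisor $(y^s)$ gives $\lmd(G,H)-D\sim E$, after which Theorem~\ref{thmlcp3.5} applies. One small slip: the places over $P_{a_k}$ for $k>n$ need not have residue degree one, since the residue field must contain a $(m,\la_k)$-th root of the value at $a_k$ of the unit part of $f$. The claim $\deg\sum_{Q\mid P_{a_k}}Q=(m,\la_k)$ still holds, because $\sum_Q e_Qf_Q=m$ with every $e_Q=m/(m,\la_k)$; more simply, it follows from $\deg(y^s)=0$, which is how the paper obtains $\deg H$.
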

\begin{proof}
By construction we have  $\supp(D)\cap \supp(G)=\supp(D)\cap \supp(H)=\emptyset$. Define the divisor 
$T=\sum_{k=n+1}^{r}\frac{s\la_k}{(m, \la_k)}\sum_{Q\in \cP_{\fq(\cX)}, Q|P_{a_k}}Q$. 
So, we can writte
  \[ G=E+(N-s\la_0)Q_0 \hspace{3mm} \text{ and } \hspace{3mm}  H=E+\sum_{k=1}^n s\la_kQ_k+T.\] 
Computing the divisor of the function $y^s$ we have  $(y^s)=\sum_{k=1}^n s\la_k Q_k+T-s\la_0Q_0$. This gives 
$$2g-2<\deg(G)=g-1+N-s\la_0<N, \quad 2g-2<\deg(H)=g-1+s\la_0<N,$$ 
and we conclude
\[ \deg(G+H)=N+2g-2. \]

It is easy to check that
$$
\begin{cases}
\gcd(G,H)=E \quad \text{ and }\\ \lmd(G,H)=E+\sum_{k=1}^n s \la_k Q_k +(N-s\la_0)Q_0+T.
 \end{cases}
$$ 
 On the other hand, since $D \sim mtQ_0=NQ_0$ by item \ref{equiv}), we have 
  \[\lmd(G,H)-D \sim  \lmd(G,H)-NQ_0=E+\sum_{k=1}^n s\la_k Q_k + T -s\la_0Q_0=E+(y^s)\sim E.\]
Hence, $(C_{\mathcal{L}}(D, G),C_{\mathcal{L}}(D, H))$ is a LCP of AG codes  by Theorem \ref{thmlcp3.5}.
\end{proof}

\begin{example}
Let $\mathcal{Z}$ be the curve as in Example \ref{exampleZ}. For $r=3$, $q=3$, $d=2$, and $m=7$, we obtain the curve 
$$
\mathcal{Z}: \quad y^7=x^5(1-x^8)
$$
with genus $g(\mathcal{Z})=24$ and 2026 rational places over $\mathbb{F}_{3^6}$. Let $Q_\infty$ be the only place at infinity in $\mathbb{F}_{3^6}(\mathcal{Z})$, $Q_1, \dots, Q_8$ be the places in $\mathbb{F}_{3^6}(\mathcal{Z})$ associated to the zeros of $1-x^8$, and $Q_0$ be the place in $\mathbb{F}_{3^6}(\mathcal{Z})$ associated to the zero of $x$. Therefore, from Example \ref{exampleZ} and Remark \ref{remark_equiv}, 
$$
E=-7Q_\infty+Q_1+2Q_2+3Q_3+3Q_4+4Q_5+5Q_6+6Q_7+6Q_8
$$
is a non-special divisor of degree $g(\mathcal{Z})-1$. For $2\leq s\leq 153$ define the divisors
\begin{align*}
& D=\textstyle\sum_{R\in \mathcal{Z}(\mathbb{F}_{3^6})\setminus\{Q_\infty, Q_1, \dots, Q_8, Q\}}R,\\
& G=(2009-13s)Q_\infty+Q_1+2Q_2+3Q_3+3Q_4+4Q_5+5Q_6+6Q_7+6Q_8, \text{ and}\\
& H=-7Q_\infty+(s+1)Q_1+(s+2)Q_2+(s+3)Q_3+(s+3)Q_4+(s+4)Q_5+(s+5)Q_6\\
&\quad \quad +(s+6)Q_7+(s+6)Q_8+5sQ_0.
\end{align*}
So, from Theorem \ref{Teocodes1}, $(C_{\mathcal{L}}(D, G),C_{\mathcal{L}}(D, H))$ is an LCP of algebraic geometric codes with parameters
$$[2016, 2016-13s] \quad  \text{ and } \quad [2016, 13s].$$
\end{example}

As a direct consequence of the previous theorem, we obtain a construction of LCP codes for the case where $f(x)$ is separable.
\begin{corollary}
With the same notation as in Theorem \ref{Teocodes1}, suppose that $f(x)$ is separable. Fix the divisors $D=\sum_{k=1}^N R_k$, and $E=\sum_{k=0}^n \alpha_k Q_k$ a non-special divisor of degree $g-1$. For $\frac{g-1}{n} < s < \frac{N+1-g}{n}$ consider the divisors
\[
		G=\sum_{k=1}^n \alpha_k Q_k+(\alpha_0+N-sn)Q_0 \hspace{3mm} \text{ and } \hspace{3mm} 
		H=\sum_{k=1}^n (s+\alpha_k) Q_k+\alpha_0Q_0 + \sum_{k=n+1}^r s Q_k. \]
Then $(C_{\mathcal{L}}(D, G),C_{\mathcal{L}}(D, H))$ is an LCP  of algebraic geometry codes with parameters
$$[N, N-sn] \quad  \text{ and } \quad [N, sn].$$
		\end{corollary}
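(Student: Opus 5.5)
This corollary will follow from Theorem \ref{Teocodes1} once we specialize to $f(x)$ separable. Separability means all $\la_k=1$ for $1\le k\le r$, so $r=\deg f$ and $\la_0=\sum_{k=1}^r\la_k=r$. The standing hypotheses (\ref{kummer-notseparable}) require $(\la_0,m)=1$. Then every finite zero $a_k$ is totally ramified, and so is the place at infinity. In particular $(m,\la_k)=1$ for all $k$, and over each $P_{a_k}$ there is exactly one place, namely $Q_k$ (for $k>n$ we also write $Q_k$ for it).

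Next I substitute into the divisors of Theorem \ref{Teocodes1}. The term $s\la_k+\al_k$ becomes $s+\al_k$. The extra term $\sum_{k=n+1}^r \frac{s\la_k}{(m,\la_k)}\sum_{Q|P_{a_k}}Q$ becomes $\sum_{k=n+1}^r sQ_k$. The coefficient of $Q_0$ in $G$ is $\al_0+N-s\la_0$.

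One point needs care: the corollary writes $n$ where Theorem \ref{Teocodes1} has $\la_0$, both in the range for $s$ and in $N-sn$. This is only consistent if $\la_0=n$, which means $r=n$: all zeros of $f$ lie in the support set $\{Q_1,\dots,Q_n\}$. In that case the sum over $k=n+1,\dots,r$ is empty. I would therefore either read the statement with $n=r=\deg f$, as in Theorem \ref{teo_g-1_Kummer} where $n$ denotes $\deg f$, or note that the general statement holds with $\la_0=r$ in place of $n$.

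With $\la_0=r$, the range $\frac{g-1}{\la_0}<s<\frac{N+1-g}{\la_0}$ and the parameters $[N,N-s\la_0]$ and $[N,s\la_0]$ then come directly from Theorem \ref{Teocodes1}. As a sanity check, the key identity $(y^s)=\sum_{k=1}^r sQ_k-s\la_0Q_0$ still holds, and this is what gives $\lmd(G,H)-D\sim E$.

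The only real obstacle is this notational reconciliation of $n$ with $\la_0$. Everything else is direct substitution.
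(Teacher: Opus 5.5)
Your proposal is correct and matches the paper, which gives no separate argument and presents the corollary as a direct specialization of Theorem \ref{Teocodes1} with $\la_k=1$, $(m,\la_k)=1$ and $\la_0=r$. Your caveat is also justified: the statement as printed is consistent only when $n=r$, in which case the sum over $k=n+1,\dots,r$ is empty. For $n<r$, $n$ must be replaced by $r$ in the range of $s$, in the $Q_0$-coefficient of $G$, and in the parameters; otherwise $\deg G+\deg H=N+2g-2+s(r-n)$.
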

		
Now, we show a construction of LCP codes which generalizes the presented in \cite[Theorem 10]{CMQ2025}.		

\begin{theorem}\label{Teocodes2}
Let $F$ be a Kummer extension of degree $m$ with $f(x)$ separable and $n=r$. Fix the divisors  $D=\sum_{k=1}^N R_k$, and two non-special divisors of degree $g-1$,  $$E_1=\sum_{k=1}^{n} \alpha_k Q_k \, \text{  and  } \, E_2=\sum_{k=0}^{n-1} \beta_k Q_k.$$ Suppose  that $s$ is an integer satisfying
\begin{enumerate}[ i)]
\item \label{Teocodes2i}  $\alpha_k-\beta_k\leq s$ for $1\leq k\leq n-1$,
\item\label{Teocodes2ii}  $\alpha_n\leq s\leq (\beta_0+N)/n$, and 
\item\label{Teocodes2iii} $\frac{g-1+\beta_0-\alpha_n}{n-1} < s < \frac{N-g+1+\beta_0-\alpha_n}{n-1}$.
\end{enumerate} 
Consider the divisors
\[
		G=\sum_{k=1}^{n-1} \alpha_k Q_k+sQ_n+(\beta_0+N-sn)Q_0 \hspace{3mm} \text{ and } \hspace{3mm} 
H=\sum_{k=1}^{n-1} (s+\beta_k) Q_k+\alpha_nQ_n. \]		
Then 		$(C_{\mathcal{L}}(D, G),C_{\mathcal{L}}(D, H))$ is an LCP  of algebraic geometric codes with dimensions $$[N,N-s(n-1)-(\alpha_n -\beta_0)] \, \text{ and } \, [N, s(n-1)+\alpha_n -\beta_0],$$  respectively.
\end{theorem}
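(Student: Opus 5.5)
The plan is to verify the hypotheses of Theorem \ref{thmlcp3.5} directly, following the same pattern as the proof of Theorem \ref{Teocodes1}. The setup is as follows. Since $f(x)$ is separable and $n=r$, we have $\la_k=1$ for $1\leq k\leq n$ and $\la_0=n$. Hence every zero of $f$ is totally ramified, and the principal divisor of $y$ is
$$(y)=\sum_{k=1}^{n}Q_k-nQ_0 .$$
By item iv) of the notation, $D\sim NQ_0$. The places $R_j$ lie over completely split places, while the $Q_k$ are totally ramified, so $\supp(G)\cap\supp(D)=\supp(H)\cap\supp(D)=\emptyset$.

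\emph{Step 1: the gcd.} I compute $\gcd(G,H)$ coefficientwise.
\begin{itemize}
\item At $Q_k$ with $1\leq k\leq n-1$: $\min\{\al_k,s+\be_k\}=\al_k$, by hypothesis i).
\item At $Q_n$: $\min\{s,\al_n\}=\al_n$, by the first inequality in ii).
\item At $Q_0$: $\min\{\be_0+N-sn,0\}=0$, since $s\leq(\be_0+N)/n$.
\end{itemize}
Thus $\gcd(G,H)=E_1$, which is non-special of degree $g-1$ by assumption.

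\emph{Step 2: the lmd.} The complementary maxima give
$$\lmd(G,H)=\sum_{k=1}^{n-1}(s+\be_k)Q_k+sQ_n+(\be_0+N-sn)Q_0 .$$
Using $D\sim NQ_0$, this yields
$$\lmd(G,H)-D\sim \sum_{k=1}^{n-1}\be_kQ_k+\be_0Q_0+s\Big(\sum_{k=1}^{n}Q_k-nQ_0\Big)=E_2+(y^s)\sim E_2 .$$
Non-speciality is invariant under linear equivalence, so $\lmd(G,H)-D$ is non-special. Alternatively, one can use $\gcd+\lmd=G+H$ as a consistency check.

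\emph{Step 3: degrees.} Using $\sum_{k=1}^{n}\al_k=g-1$ and $\sum_{k=0}^{n-1}\be_k=g-1$, I get
$$\deg H=s(n-1)+g-1+\al_n-\be_0,\qquad \deg G=N+g-1-s(n-1)-\al_n+\be_0 .$$
Hence $\deg G+\deg H=N+2g-2$, which is hypothesis i) of Theorem \ref{thmlcp3.5}. The condition $2g-2<\deg H<N$ is exactly hypothesis iii) of the statement. Because $\deg G=N+2g-2-\deg H$, the same interval also gives $2g-2<\deg G<N$.

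\emph{Step 4: conclusion.} Theorem \ref{thmlcp3.5} now applies and shows the pair is an LCP. The dimensions follow from Proposition \ref{parameters}: $k=\deg-g+1$, which gives $s(n-1)+\al_n-\be_0$ for $H$ and $N-s(n-1)-(\al_n-\be_0)$ for $G$.

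The only delicate point is the bookkeeping in Steps 1 and 2: one must use each of the hypotheses i) and ii) at the right place so that the minima and maxima come out as claimed, and recognize the difference $\lmd(G,H)-D-E_2$ as the principal divisor $(y^s)$ modulo $D-NQ_0$. Everything else is routine.
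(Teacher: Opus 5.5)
Your proof is correct and follows the same route as the paper: you compute $\gcd(G,H)=E_1$ and $\lmd(G,H)-D\sim E_2+(y^s)\sim E_2$, check the degree conditions through hypothesis iii), and invoke Theorem \ref{thmlcp3.5}. Your version is slightly more careful than the paper's, because you state exactly where hypotheses i) and ii) enter the coefficientwise $\gcd$ and $\lmd$ computation, and you explain why $2g-2<\deg(G)<N$ follows from the same bound for $\deg(H)$ via $\deg(G)+\deg(H)=N+2g-2$.
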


\begin{proof}
	
It is clear that $\supp(D)\cap \supp(G)=\supp(D)\cap \supp(H)=\emptyset$. Now, observe that $$\deg (G)=g-1+N+s(1-n)+\beta_0-\alpha_n \hspace{3mm} \text{ and } \hspace{3mm}
\deg (H)=g-1+s(n-1)+\alpha_n-\beta_0,$$ satisfy $2g-2<\deg (G)$, $\deg (H) <N$ and $\deg(G+H)=N+2g-2$. 
On the other hand, by item \ref{equiv}) we have that
$$\gcd(G,H)=\sum_{k=1}^{n-1} \alpha_k Q_k+\alpha_nQ_n =E_1$$
and
\begin{align*}
\lmd(G,H)-D&=\sum_{k=1}^{n-1}(s+\beta_k)Q_k+sQ_n+(\beta_0+N-sn)Q_0-D\\
&=\sum_{k=1}^{n}sQ_k-snQ_0+\sum_{k=0}^{n-1}\beta_kQ_k +NQ_0-D\\
&\sim \sum_{k=0}^{n-1}\beta_kQ_k=E_2
\end{align*}
are non-special divisors of degree $g-1$. Then by Theorem \ref{thmlcp3.5} is $(C_{\mathcal{L}}(D, G),C_{\mathcal{L}}(D, H))$ is an LCP  of algebraic geometric codes.
\end{proof}

We now present a construction of LCPs of codes from non-special divisors of degree $g-1$, and support not necessarily in the totally ramified places of the function field.

\begin{theorem}\label{TeocodesR}
	Let $F$ be a Kummer extension of degree $m$ with $f(x)$ separable and $n=r$. Suppose that  $R_1, \dots R_m$  lying over $P_1$. Consider the divisors
	$$D=\sum_{k=m+1}^N R_k+R_2, \quad \text{ and } \quad  E=\sum_{k=0}^n \alpha_k Q_k$$ a non-special divisor of degree $g$. For an integer $s$ such that $\frac{g-1}{n} < s <\frac{N-m-g+2}{n}$ define the divisors
	\[
	G=\sum_{k=1}^n \alpha_k Q_k+(N-m+\alpha_0-sn)Q_0 \hspace{3mm} \text{ and } \hspace{3mm} 
	H=\sum_{k=1}^n (s+\alpha_k) Q_k+\alpha_0 Q_0-R_1. \]
	
	Then 		$(C_{\mathcal{L}}(D, G),C_{\mathcal{L}}(D, H))$ is an LCP  of algebraic geometric codes with parameters
	$$[N-m+1, N-m+1-sn] \quad  \text{ and } \quad [N-m+1, sn ]$$
\end{theorem}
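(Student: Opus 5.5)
The plan is to verify the hypotheses of Theorem \ref{thmlcp3.5} directly, following the proofs of Theorems \ref{Teocodes1} and \ref{Teocodes2}. We take $E$ as in the statement. Its degree must be $g$, since removing $R_1$ in $H$ should bring the gcd down to degree $g-1$.

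First, the supports. The place $R_1$ is not in $\supp(D)$, because $D$ uses $R_2$ and the $R_k$ with $k>m$. The $Q_k$ are not in $\supp(D)$ either. So $\supp(G)\cap\supp(D)=\supp(H)\cap\supp(D)=\emptyset$, and $\deg D=N-m+1$.

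Next, the degrees. We have
$$\deg G=g+N-m-sn, \qquad \deg H=g+sn-1.$$
The bounds on $s$ give $2g-2<\deg G,\deg H<N-m+1$. Indeed, $s<(N-m-g+2)/n$ gives $\deg H<N-m+1$, and also $\deg G>2g-2$. Likewise $s>(g-1)/n$ gives $\deg H>2g-2$, and $\deg G<N-m+1$ follows since $sn\ge g>0$. The sum is $\deg(G+H)=N-m+1+2g-2$, which is condition i).

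Then the gcd and lmd. Place by place we get $\gcd(G,H)=E-R_1$, using $s\ge 1$, the $Q_0$ coefficient $\alpha_0\le N-m+\alpha_0-sn$ (from $sn\le N-m$), and $\min(0,-1)=-1$ at $R_1$. This gcd has degree $g-1$, which is condition ii). For the lmd,
$$\lmd(G,H)=\sum_{k=1}^n(s+\alpha_k)Q_k+(N-m+\alpha_0-sn)Q_0 .$$

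The main step is showing $\lmd(G,H)-D$ is non-special. Since $f$ is separable and $n=r$, the principal divisor of $y^s$ is $(y^s)=s\sum_{k=1}^nQ_k-snQ_0$, so
$$\lmd(G,H)\sim E+(N-m)Q_0.$$
By item iv), $\sum_{k=m+1}^N R_k\sim (N-m)Q_0$, because these $R_k$ are all the places over $P_2,\dots,P_t$. Hence
$$\lmd(G,H)-D\sim E-R_2 .$$

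It remains to show that $E-R_1$ and $E-R_2$ are non-special. Here we need $E$ to be effective and to use Lemma \ref{lemma1}: $R_1,R_2$ are rational and not in $\supp(E)\subseteq\{Q_0,\dots,Q_n\}$. So we should read the hypothesis as "$E$ an effective non-special divisor of degree $g$". By Theorem \ref{teo2-car-div-deg-g}, such $E$ are exactly $\sum\alpha_kQ_k$ with $0\le\alpha_k\le m-1$ satisfying the stated condition.

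If $E$ is not effective, Lemma \ref{lemma1} can fail, as the paper's $y^4=x^3+x$ example shows, and this is the obstacle. First we would check whether $E$ can be replaced by an effective linearly equivalent divisor. Failing that, we would add effectivity to the hypotheses.

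With both divisors non-special, Theorem \ref{thmlcp3.5} yields the LCP property. Proposition \ref{parameters} gives the dimensions $\deg G+1-g=N-m+1-sn$ and $\deg H+1-g=sn$.
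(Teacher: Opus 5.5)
Your verification of the supports, the degree bounds, $\gcd(G,H)=E-R_1$, the formula for $\lmd(G,H)$, and $\lmd(G,H)-D\sim E-R_2$ is correct. The last of these uses $(y^s)=s\sum_{k=1}^nQ_k-snQ_0$ and the fibre sums over $P_2,\dots,P_t$, and your argument follows the paper's proof step by step. The paper then takes the non-speciality of $E-R_1$ and $E-R_2$ for granted and applies Theorem~\ref{thmlcp3.5}.

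The gap in your proposal is exactly that step. You justify it only for effective $E$ and propose adding effectivity as a hypothesis, which proves a weaker statement than the one given. Your fallback, replacing $E$ by an effective linearly equivalent divisor, is not the missing idea by itself. Since $\ell(E)=1$, the divisor $E':=E+(z)$ with $0\neq z\in\mathcal{L}(E)$ is automatically effective and non-special. What must be shown is $R_1,R_2\notin\supp(E')$. That is precisely what fails in the paper's $y^4=x^3+x$ example, where $-2Q_1+2Q_2+3Q_3\sim Q_3+2Q_\infty$.

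The missing idea is the Galois invariance of $E$.
\begin{enumerate}[1.]
\item Since $P_1=P_b$ splits into $m$ rational places, $\fq$ contains a primitive $m$-th root of unity $\zeta$. The map $\sigma\colon y\mapsto\zeta y$ generates $\mathrm{Gal}(F/\fq(x))$.
\item $\sigma$ fixes every totally ramified place $Q_k$, so $\sigma(E)=E$. Hence $\sigma$ preserves the line $\mathcal{L}(E)$, and $\sigma(z)=cz$ for some $c\in\fq^*$.
\item Therefore $\nu_{\sigma(R)}(z)=\nu_{\sigma(R)}(\sigma(z))=\nu_R(z)$ for every place $R$. Since $\sigma$ permutes $R_1,\dots,R_m$ transitively, if $z$ vanished at $R_1$ or at $R_2$, we would get $(z)\geq -E+R_1+\cdots+R_m$.
\item Because $(x-b)=R_1+\cdots+R_m-mQ_0$, this would give $(z/(x-b))\geq -E+mQ_0\geq -E$. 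Then $z$ and $z/(x-b)$ would be linearly independent elements of $\mathcal{L}(E)$, contradicting $\ell(E)=1$.
\end{enumerate}
So $R_1,R_2\notin\supp(E')$. Lemma~\ref{lemma1}, applied to the effective divisor $E'$, shows that $E'-R_1$ and $E'-R_2$ are non-special, and hence so are $E-R_1\sim E'-R_1$ and $E-R_2\sim E'-R_2$. With this step inserted, your argument proves the theorem as stated, with no effectivity hypothesis.
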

\begin{proof}
	By construction we have  $\supp(D)\cap \supp(G)=\supp(D)\cap \supp(H)=\emptyset$. Since $\frac{g-1}{n} < s <\frac{N-m-g+2}{n}$, it is easy to check that
	$$
	\begin{cases}
		2g-2 < \deg(G), \deg(H) < N-m+1,\\
		\gcd(G,H)=E-R_1, \text{ and }\\
		 \lmd(G,H)=\sum_{k=1}^n (s+\alpha_k) Q_k +(N-m-sn+\alpha_0)Q_0.
	\end{cases}
	$$ 
	
We also have 
	$$D=\sum_{k=m+1}^N R_k+R_2\sim (N-m)Q_0+R_2 \quad \text{ and } \quad  \sum_{k=1}^n Q_k \sim nQ_0,$$ so 
	$$\lmd(G,H)-D\sim E-R_2,$$
	a non-special divisor of degree $g-1$.
	
	Since $G+H=\gcd(G,H)+\lmd(G,H)$, we obtain	
	\[ \deg(G+H)=\deg(E-R_1)+ \deg(D+E-R_2)=(N-m+1)+2g-2. \] 
	Hence, $(C_{\mathcal{L}}(D, G),C_{\mathcal{L}}(D, H))$ is a LCP of AG codes  by Theorem \ref{thmlcp3.5}.
\end{proof}

%

\bibliographystyle{abbrv}

\bibliography{bib} 

\end{document}